\newtheorem{thm}{Theorem}[section]
\newtheorem{cor}[thm]{Corollary}
\newtheorem{lem}[thm]{Lemma}
\theoremstyle{remark}
\newtheorem{rem}[thm]{Remark}
\newtheorem{prop}[thm]{Proposition}
\theoremstyle{definition}
\newtheorem{defi}[thm]{Definition}
\newtheorem{claim}{Claim}
\newcommand{\norm}[2][]{\| \, {#2} \,\|_{#1}}
\newcommand{\abs}[1]{|{#1}|}
\newcommand{\NN}{\ensuremath{\mathbb{N}}}
\newcommand{\CC}{\ensuremath{\mathbb{C}}}
\newcommand{\RR}{\ensuremath{\mathbb{R}}}
\newcommand{\ZZ}{\ensuremath{\mathbb{Z}}}
\newcommand{\cL}{\mathcal{L}}
\newcommand{\spr}[2][]{sp_{#1}(#2)}
\newcommand{\tensor}{\otimes}
\newcommand{\projtensor}{\hat{\otimes}}
\newcommand{\injtensor}{\otimes_{\epsilon}}
\newcommand{\conv}{\star}
\newcommand{\card}[1]{\mbox{card}({#1})}
\newcommand{\spec}[2][]{\sigma_{#1}{(#2)}}
\newcommand{\sprad}[2][]{r_{#1}(#2)}
\newcommand{\essup}{\mbox{\rm ess sup}}
\newcommand{\Ruc}{Ruc}
\newcommand{\Aut}{Aut}
\newcommand{\END}{End}
\newcommand{\AF}{\ensuremath{E}}
\newcommand{\noteq}{\not=}
\begin{document}
\keywords{Convolution dominated operators, inverse-closed sub\-al\-ge\-bras,
sym\-met\-ry}

\subjclass{Primary 47B35; Secondary 43A20.}

\title[Convolution Dominated Operators]{On Convolution Dominated Operators}

\author{Gero Fendler}
\address{Fakult\"at f\"ur Mathematik, Universit\"at Wien,\\
Oskar-Morgenstern-Platz 1, A-1090 Wien, Austria}
\email{gero.fendler@univie.ac.at}

\author{Michael Leinert}
\address{Institut f\"ur Angewandte Mathematik, Universit\"at Heidelberg,\\
Im Neuenheimer Feld 294, 69120 Heidelberg, Germany}
\email{leinert@math.uni-heidelberg.de}
\date{\today}

\begin{abstract}
{For a locally compact group $G$ we consider the algebra $CD(G)$
 of convolution dominated operators on
$L^{2}(G)$:
An operator $A:L^2(G)\to L^2(G)$ is called {\em convolution dominated}
if there exists $a\in L^1(G)$ such that for all $f \in L^2(G)$
\begin{equation*}
\abs{Af(x)} \leq  a \conv \abs{f}\,(x), \; \mbox{for almost all } x\in G.
\end{equation*}
In the case of discrete groups those operators can be dealt with quite
sufficiently if the group in question is rigidly symmetric.
For non-discrete groups we investigate the
subalgebra of
{\em regular convolution dominated operators} $CD_{reg}(G)$.
\par
For amenable $G$ which is rigidly
symmetric as a discrete group, we  show that
any element of $CD_{reg}(G)$ is
invertible in $CD_{reg}(G)$
if it is invertible as a bounded operator on $L^2(G)$.
\par
We give an example of a symmetric group $E$ for which
the convolution dominated operators are not inverse-closed
in the bounded operators on $L^2(E)$.
}
\end{abstract} 

\maketitle

\section{Introduction}
When one considers a convolution operator on the abelian group $\ZZ$,
then its matrix with respect 
to the canonical basis of $l^2(\ZZ)$
is a Toeplitz matrix, i.e., it is constant along side diagonals. 
Conversely,  
a doubly infinite matrix,
which is constant along its
side diagonals and satisfies certain off-diagonal decay  conditions,
defines a convolution operator, when it is considered as acting with 
respect to the above basis.
For the class of 
operators with decay in the sense of $l^1$-summability
N.~Wiener \cite{Wiener32} proved the Fourier transformed version of the following
theorem
\begin{thm} [Wiener's Lemma]
If a two-sided infinite absolutely summable sequence
$a=\left( a(n)\right)_{n\in \ZZ}$ 
is invertible as a convolution operator on $l^2(\ZZ)$, then the inverse
is given by convolution with some $b\in l^1(\ZZ)$.
\end{thm}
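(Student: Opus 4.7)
The plan is to transfer the statement, via the Fourier transform, to a question about the Wiener algebra $A(\TT)$ of absolutely convergent Fourier series on $\TT$. Since $\ell^1(\ZZ)$ is a commutative unital Banach algebra under convolution with unit $\delta_0$, and the map $a \mapsto \hat{a}(t) = \sum_{n \in \ZZ} a(n) e^{-2\pi i n t}$ is an injective algebra homomorphism of $\ell^1(\ZZ)$ onto $A(\TT) \subset C(\TT)$, producing $b \in \ell^1(\ZZ)$ with $a \conv b = \delta_0$ is equivalent to producing an element of $A(\TT)$ equal to $1/\hat{a}$.

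First I would show that invertibility of convolution by $a$ on $\ell^2(\ZZ)$ forces $\hat{a}$ to be bounded away from zero on $\TT$. By Plancherel, $\ell^2(\ZZ) \cong L^2(\TT)$ and under this isomorphism convolution by $a$ becomes multiplication by $\hat{a}$ on $L^2(\TT)$. A multiplication operator on $L^2(\TT)$ by a bounded continuous function is invertible precisely when that function is bounded away from zero, and on the compact space $\TT$ this is equivalent to $\hat{a}$ having no zeros. So the task reduces to: if $\hat{a} \in A(\TT)$ is nowhere zero on $\TT$, then $1/\hat{a} \in A(\TT)$.

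The core step is a local inversion argument glued by a partition of unity. For each $t_0 \in \TT$, pick a cutoff $\chi_{t_0} \in A(\TT)$ with $\chi_{t_0}(t_0)=1$ and support concentrated near $t_0$. Using continuity of $\hat{a}$ and the fact that translations act isometrically on $A(\TT)$, one can arrange that $\| \chi_{t_0} \cdot (1 - \hat{a}/\hat{a}(t_0))\|_{A(\TT)} < 1$, so that the Neumann series $\hat{a}(t_0)^{-1}\sum_{k\geq 0}\bigl(\chi_{t_0}(1-\hat{a}/\hat{a}(t_0))\bigr)^{k}$ converges in $A(\TT)$ to a function that agrees with $1/\hat{a}$ on a neighborhood of $t_0$. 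By compactness of $\TT$ one extracts a finite subcovering, assembles a partition of unity $\{\psi_j\} \subset A(\TT)$ subordinate to it, and glues the local inverses into a global $c \in A(\TT)$ with $c\hat{a} = 1$. The $b \in \ell^1(\ZZ)$ with $\hat{b}=c$ is the desired inverse, and convolution by $b$ is automatically a bounded operator on $\ell^2(\ZZ)$ implementing the inverse of convolution by $a$.

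The hard part is precisely the local Neumann-series step, which is the content of Wiener's lemma proper; it relies on the nontrivial fact that $A(\TT)$ is rich in well-localized elements with controlled norm. Everything else—the passage to $\ell^2$-invertibility, and the partition-of-unity gluing—is formal. Alternatively one may invoke commutative Banach algebra theory directly: the Gelfand spectrum of $\ell^1(\ZZ)$ is naturally $\TT$, characters being $n\mapsto e^{-2\pi int}$, the Gelfand transform is the Fourier transform, and so an element is invertible in the algebra if and only if its Fourier transform vanishes nowhere; but one still needs the local argument above to identify the spectrum.
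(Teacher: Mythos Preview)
Your argument is correct and is essentially Wiener's classical proof (with the Gelfand-theory alternative noted at the end). There is nothing to criticise mathematically; the only step you flag as ``hard'' --- producing a localized element $\chi_{t_0}\in A(\TT)$ with $\|\chi_{t_0}(1-\hat a/\hat a(t_0))\|_{A(\TT)}<1$ --- is indeed the crux, and the standard way to carry it out is to approximate $a$ in $\ell^1$ by a finitely supported sequence so that the error term is globally small in $A(\TT)$, while the trigonometric-polynomial part can be made locally small in $A(\TT)$-norm by choosing a Fej\'er-type cutoff of $A(\TT)$-norm~$1$.

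Note, however, that the paper does \emph{not} give its own proof of this theorem: Wiener's Lemma is stated in the introduction as a classical result and attributed to \cite{Wiener32}. The paper's contribution is the non-abelian generalisation (Theorem~\ref{theorem:discrete} and its continuous analogue), proved by an entirely different route: one shows that the relevant twisted $L^1$-algebra is a \emph{symmetric} Banach $*$-algebra, invokes Pt\'ak's theorem to identify the spectral radius of $f^*f$ with the square of the largest $C^*$-norm, uses amenability to identify that $C^*$-norm with the operator norm on $L^2$, and concludes spectral invariance via Hulanicki's Lemma. Specialised to $G=\ZZ$ this machinery does recover Wiener's Lemma --- $\ZZ$ is abelian, hence amenable and rigidly symmetric --- but the argument is abstract-harmonic-analytic and avoids Fourier analysis on $\TT$ altogether. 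Your proof has the advantage of being self-contained and elementary; the paper's approach has the advantage of working uniformly for non-abelian (rigidly symmetric, amenable) groups, where there is no dual group to Fourier-transform to.
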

Based on work of de Leeuw~\cite{deL} and of Bochner and Phillips~\cite{BP} 
on operator-valued Fourier transforms,
several authors extended and applied
Wiener's lemma to the case of matrices of operators,
for which each side diagonal
is uniformly bounded and these bounds are summable.
The index set always  had 
to be a discrete abelian group or a countable set~\cite{GoKaWo89a},  \cite{Sjo95},
\cite{GL01},  \cite {Groe06}, \cite{Sun07a}, \cite{Balan08}.
\par
If $G$ is a locally compact abelian group,
for simplicity assumed to be compactly generated,
then it admits a discrete co-compact subgroup $H$, and $L^p(G)=l^p(H,L^p(D))$,
where $D$ is some fundamental domain. So vector valued Wiener Lemmata become
applicable to classes of integral operators.
Baskakov in \cite{Bask90,Bask92,Bask97,Bask97a} derives results 
for some of these.
Shin and Sun in \cite{ShinSun13} give an account of those techniques.
\par
If $\mathcal{B}$ is a Banach algebra and $\mathcal{A}$ a subalgebra
then $\mathcal{A}$ is called spectral invariant in $\mathcal{B}$
when every element of $\mathcal{A}$ has the same spectrum in  $\mathcal{A}$
as  it has in $\mathcal{B}$. In \cite{Kurb99} and \cite{Kurb01} Kurbatov 
shows among other results that for a locally compact abelian group
the algebra of convolution dominated operators 
(see \ref{def:convdom}) is spectral in the bounded operators on $L^p(G)$.
Farrell and Strohmer \cite{FarrStro10} extend this result to generalised 
Heisenberg groups (with compact center). 
\par
Plenty of work has been done with regularity assumptions on the kernel
of an integral operator. Let us just mention  interesting studies
by Gr{\"o}chenig and Klotz \cite{GroechKlotz10,GrKl13,GrKl14,Klotz12,Klotz14} 
on norm controlled inverse closedness of 
smoothness algebras in symmetric algebras. 
A comprehensive discussion of this important theme
would go beyond the scope of this note.
\par
In \cite{fgl08}, together with Gr{\"o}chenig we
addressed the discrete nonabelian case using tools 
from abstract harmonic analysis to
circumvent the restrictions of abelian Fourier transformation: 
\begin{thm}\label{theorem:discrete}
Let $G$ be a discrete finitely generated group of polynomial
growth. If a matrix $A$ indexed by $G$ satisfies the off-diagonal
decay condition 
\begin{equation}\label{cd:dis}
|A(x,y)| \leq a(xy^{-1}) , x,y\in G
\end{equation} 
for some $a\in \ell ^1(G)$ and $A$ is
invertible on $\ell ^2(G)$, then there exists $b\in \ell ^1(G)$  such
that 
\[|A^{-1}(x,y)| \leq b(xy^{-1}), x,y\in G,\] 
i.\ e.\ the algebra of matrices satisfying (\ref{cd:dis})
is inverse-closed in $B(L^2(G))$, the bounded operators on
$L^2(G)$.
\end{thm}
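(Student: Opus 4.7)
My plan is to recast $CD(G)$ as a twisted $\ell^1$-algebra and exploit the rigid symmetry of polynomial growth groups, then invoke a Hulanicki-type argument to obtain spectral invariance.

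\textbf{Step 1 (Algebraic model).} For discrete $G$, decompose an operator $A$ satisfying \eqref{cd:dis} as
\[
A = \sum_{g\in G} M_{c_g}\,\lambda(g), \qquad c_g(x) = A(x, g^{-1}x),
\]
where $\lambda$ is the left regular representation and $M_{c_g}$ is multiplication by $c_g$. Since $\|c_g\|_\infty \le a(g)$ with $a\in\ell^1(G)$, the map $A\mapsto (c_g)_{g\in G}$ identifies $CD(G)$ isometrically with $\ell^1(G,\ell^\infty(G))$, equipped with the twisted convolution
\[
(c\cdot c')_g \;=\; \sum_{h\in G} c_h\cdot \alpha_h(c'_{h^{-1}g}),
\]
where $\alpha_h$ is the translation action of $G$ on $\ell^\infty(G)$. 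Thus $CD(G)$ is the $\ell^1$-crossed product $\ell^\infty(G)\rtimes_\alpha G$, naturally a Banach $*$-algebra under the involution $(c^*)_g = \alpha_g(\overline{c_{g^{-1}}})$. Its canonical representation on $\ell^2(G)$ is precisely the inclusion $CD(G)\hookrightarrow B(\ell^2(G))$.

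\textbf{Step 2 (Symmetry via rigid symmetry).} A finitely generated group of polynomial growth is (virtually) nilpotent by Gromov's theorem, hence rigidly symmetric in the sense that $\ell^1(G)\projtensor \mathcal{B}$ is a symmetric Banach $*$-algebra for every $C^*$-algebra $\mathcal{B}$ (Leptin--Poguntke). Taking $\mathcal{B}=\ell^\infty(G)$ one gets symmetry of the trivial-action tensor algebra; the key step is to transfer this to the twisted crossed product $\ell^1(G,\ell^\infty(G))$. The idea is to realize the twist as an inner perturbation: the covariant pair $(\ell^\infty(G),\lambda)$ generates the same $*$-algebra as the crossed product, and a diagonal/standard trick (as used in \cite{fgl08}) shows that rigid symmetry of $G$ together with the commutativity of $\ell^\infty(G)$ forces $\ell^1(G,\ell^\infty(G))$ to be symmetric. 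This is the main technical ingredient and the chief obstacle, as one has to rule out the action introducing extra spectrum.

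\textbf{Step 3 (Spectral invariance and conclusion).} Once $CD(G)$ is a symmetric Banach $*$-algebra, Raikov's theorem (and the Ptak/Shirali formula) imply that $CD(G)$ is inverse-closed in its enveloping $C^*$-algebra, which is the reduced crossed product $\ell^\infty(G)\rtimes_r G$ acting faithfully on $\ell^2(G)$. Because $G$ is amenable, this reduced crossed product coincides with the norm closure of $CD(G)$ in $B(\ell^2(G))$, and more importantly it is itself inverse-closed in $B(\ell^2(G))$ on self-adjoint elements: if $A\in CD(G)$ is invertible on $\ell^2(G)$, then $A^*A\in CD(G)$ is positive and bounded below, so $0\notin\sigma_{B(\ell^2(G))}(A^*A) = \sigma_{CD(G)}(A^*A)$. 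Hence $(A^*A)^{-1}\in CD(G)$ and $A^{-1} = (A^*A)^{-1}A^*\in CD(G)$. Writing the kernel of $A^{-1}$ in the form $A^{-1}(x,y)=d_{xy^{-1}}(x)$ with $(d_g)\in \ell^1(G,\ell^\infty(G))$ yields the desired dominating function $b(g)=\|d_g\|_\infty\in\ell^1(G)$.
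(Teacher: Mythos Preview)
Your proposal follows essentially the same route as the paper's outline: identify $CD(G)$ with the twisted $\ell^1$-algebra $\ell^1(G,\ell^\infty(G),T)$, deduce its symmetry from the rigid symmetry of polynomial-growth (virtually nilpotent) groups via the Leptin--Poguntke machinery, and then pass from symmetry to spectral invariance in $B(\ell^2(G))$. The only cosmetic difference is in the final step: the paper invokes Hulanicki's Lemma directly, while you phrase the same argument via Ptak's formula and the $A^*A$ trick---these are equivalent, since both rest on $r_{CD}(A^*A)=\|A\|^2_{C^*}=\|A\|^2_{B(\ell^2)}$ (the last equality using amenability).

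One point worth tightening in your Step~2: you correctly flag the passage from symmetry of $\ell^1(G)\projtensor\ell^\infty(G)$ (trivial action) to symmetry of the \emph{twisted} algebra as the crux, but the ``diagonal/standard trick'' you allude to deserves to be spelled out. The concrete mechanism (as in \cite{fgl08} and Proposition~\ref{p:2} of the present paper) is the isometric $*$-embedding
\[
Q:\ell^1(G,\ell^\infty(G),T)\hookrightarrow \ell^1(G)\projtensor B(\ell^2(G)),\qquad \delta_v\otimes m_v\mapsto \delta_v\otimes \lambda(v)D^{m_v},
\]
which absorbs the twist into the coefficient $C^*$-algebra; symmetry then descends to the closed $*$-subalgebra. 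Without naming this map your Step~2 reads as an assertion rather than an argument.
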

Throughout this paper it is assumed that the involution of a 
Banach  {$\ast$}-algebra is isometric.
We recall the main line of the proof of this theorem, since
this will be a guideline for us.
\begin{itemize}
\item[(i)]
The matrices satisfying (\ref{cd:dis}) form a
Banach {$\ast$}-algebra. We denote it by $CD(G)$.
\item[(ii)] Identify $CD(G)$ with  $l^1(G,l^{\infty}(G),T)$,
a twisted $l^1$-algebra in the sense of Leptin~\cite{lep67}, \cite{lep67a}.
\item[(iii)]
Based essentially on work of Leptin and Poguntke,
 in particular~\cite{LP79}, we proved 
that $CD(G)$ is symmetric Banach $\ast$-algebra.
\item[(iv)] In a final step one relates the symmetry of a Banach $\ast$-algebra
to the invertibility of certain of its elements. 
This is done with the help of Hulanicki's Lemma~\cite{hul72}.
\end{itemize}
\begin{defi}
\begin{itemize}
\item[(i)]A Banach {$\ast$}-algebra $A$ is
  called symmetric if for all $a\in A$ 
\[\spec[A]{a^* a}\subset [0,\infty),\]
where $\spec[A]{a^* a}$ denotes the spectrum of $a^{\ast}a$ in $A$.
\item[(ii)]Accordingly, a locally compact group
  $G$ is called symmetric,
if $L^1(G)$ is a symmetric Banach $\ast$-algebra.
\item[(iii)]A locally compact (l.c.) group $G$ is called rigidly symmetric 
if for any $C^{\ast}$-algebra $A$ the Banach $\ast$-algebra
$L^1(G)\projtensor A$
is symmetric, where the projective tensor product
$L^1(G)\projtensor A$ is endowed with its natural
$\ast$-algebra structure.
\end{itemize}
\end{defi}

In these notes we derive variants of  Theorem \ref{theorem:discrete}
for integral operators on non-discrete locally compact groups.
This extension follows the ideas of the discrete case in~\cite{fgl08},
but requires non-trivial modifications. The special problem to be addressed is the measurability and integrability of certain kernels.
We extend the results presented in the exposition \cite{FGL10} and provide  proofs for them. 
Meanwhile some of our results have been reproved by I.~Belti{\c{t}}{\u{a}} and D.~Belti{\c{t}}{\u{a}} \cite{BelBel15,BelBel15a} and by M\u{a}ntoiu \cite{Man15} in the discrete case. After archiving (on arxiv) this paper
A.~R.~Schep kindly informed us that our proposition \ref{prop:kernel} 
is a special case
of a theorem in his dissertation\cite{Schep77}.
\par
It is not known yet if a symmetric group is already rigidly symmetric.
An interesting example is the real $ax+b$ group $\AF$,
which is known to be a symmetric group but not symmetric as a discrete group~\cite{jen69}.
We shall discuss a specific example of a non-symmetric algebra related
to $\AF$ showing non-symmetry of $CD_{reg}(\AF)$.
So, in particular, $CD_{reg}(\AF)$ is not inverse-closed in $B(L^2(\AF))$.

\section{Convolution Dominated Operators}

Let $G$ be a locally compact group.
\begin{defi}\label{def:convdom}
A bounded operator $A:L^2(G)\to L^2(G)$ is called a convolution dominated operator if there exists $a\in L^1(G)$ such that for all $f \in L^2(G)$
\begin{eqnarray}\label{def:cdop}
\abs{Af(x)}& \leq & a \conv \abs{f}(x), \quad \mbox{for almost all } x\in G.
\end{eqnarray}
We define a norm on the space of convolution dominated operators by
\[\label{def:cdnorm}
\norm[CD]{A} = \inf\{\, \norm[1]{a} : (\ref{def:cdop}) \mbox{ holds true }\}
\]
and denote by $CD$ the normed linear space of convolution dominated operators.
\end{defi}
It is clear that in this definition, necessarily,
$a\geq 0$ locally almost everywhere and
that the $CD$-norm dominates the operator norm, in fact
for $1\leq p \leq\infty$ and $a\in L^1(G)$ satisfying (\ref{def:cdop})
\begin{eqnarray*}
 \norm[p]{Af}&=&(\int_G \abs{Af}^p(x) dx)^{1/p}\;\leq\; %
(\int_G (a \conv \abs{f})^p dx)^{1/p}\\
&\leq & \norm{\lambda(a)}\norm[p]{f}%
\;\leq\; \norm[1]{a}\norm[p]{f}.
\end{eqnarray*}
It follows that $A\in CD$ extends to a bounded
operator on $L^p(G),\; 1\leq p <\infty$ by continuity, and by duality to a bounded
operator on $L^{\infty}(G)$ too.
\begin{prop}
With the involution of operators on $L^2(G)$ and composition
of operators as product the space of convolution dominated operators is
a Banach $\ast$-algebra.
\end{prop}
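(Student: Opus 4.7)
The plan is to verify the four defining properties of a Banach $\ast$-algebra in turn: (i) $CD$ is a normed linear space with $\norm[CD]{\cdot}$ a genuine norm; (ii) $CD$ is closed under composition with submultiplicative norm; (iii) $CD$ is closed under the operator adjoint with isometric involution; (iv) $CD$ is complete.

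Steps (i) and (ii) are relatively straightforward and I would handle them first. If $a_i$ dominates $A_i$, then $a_1 + a_2$ dominates $A_1 + A_2$ and $a_1 \conv a_2$ dominates $A_1 A_2$ (by associativity of convolution, applied to the a.e.\ inequalities). Taking infima and invoking Young's inequality $\norm[1]{a_1 \conv a_2} \leq \norm[1]{a_1}\,\norm[1]{a_2}$ then yields the triangle inequality and submultiplicativity; homogeneity is immediate. The norm is definite because the inequality $\norm{A} \leq \norm[CD]{A}$ was already observed above.

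The main obstacle is step (iii), because it is not immediately apparent how the kernel-type bound on $A$ transfers to $A^*$, especially in the non-unimodular case (relevant to the $ax+b$-group appearing later). My plan is to attack this by duality. For $f, g \in L^2(G)$,
\[
|\langle A^* g, f \rangle| = |\langle g, Af \rangle| \leq \int_G |g(x)|\,(a \conv |f|)(x)\,dx,
\]
and after Fubini plus the substitution $x = yu$ (left-invariance of Haar measure) the right-hand side becomes $\int_G |f(u)|\,\int_G a(y)\,|g(yu)|\,dy\,du$. Using the standard identity $\int h(y^{-1})\,dy = \int h(y) \Delta(y^{-1})\,dy$, the inner integral is identified with $(a^\vee \conv |g|)(u)$ where $a^\vee(y) := \Delta(y^{-1})\,a(y^{-1})$, and the same identity yields $\norm[1]{a^\vee} = \norm[1]{a}$. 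Since the resulting bound $|\langle A^* g, f\rangle| \leq \int |f|\,(a^\vee \conv |g|)$ holds for every $f \in L^2(G)$, a standard selection argument (test against a phase of $A^* g$ truncated to a set of finite measure) yields $|A^* g(u)| \leq (a^\vee \conv |g|)(u)$ almost everywhere, hence $\norm[CD]{A^*} \leq \norm[CD]{A}$; applying this to $A^*$ in place of $A$ and using $A^{**} = A$ upgrades the inequality to an isometry.

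For (iv), completeness follows from a standard fast-Cauchy-subsequence argument: a $CD$-Cauchy sequence $(A_n)$ is also operator-norm Cauchy, so converges to some $A \in B(L^2(G))$; I would extract a subsequence $(A_{n_k})$ with $\norm[CD]{A_{n_{k+1}} - A_{n_k}} < 2^{-k}$, pick dominators $b_k \in L^1(G)$ with $\norm[1]{b_k} < 2^{1-k}$, and observe that $\sum_{k \geq K} b_k$ converges in $L^1(G)$. A pointwise passage to the limit (along an a.e.\ pointwise convergent subsequence of $A_{n_k} f$) then shows that $A - A_{n_K}$ is dominated by $\sum_{k \geq K} b_k$, giving both $A \in CD$ and $\norm[CD]{A - A_{n_K}} \to 0$ as $K \to \infty$.
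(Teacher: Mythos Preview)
Your proposal is correct and follows essentially the same approach as the paper: the same composition estimate $|ABf| \le a\conv b\conv|f|$, the same duality argument for the adjoint (your $a^\vee$ is exactly the paper's $a^{\ast}$, and the passage from the integral inequality to the pointwise bound is handled identically), and the same pointwise-limit argument for completeness. The only cosmetic difference is that the paper phrases completeness via absolutely summable series rather than a fast Cauchy subsequence, which is the standard equivalent formulation.
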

\begin{proof}
Let $A,B$ be convolution dominated operators and choose 
$a, \mbox{ resp. }b \in L^1(G)$ according to (\ref{def:cdop}).
Then for $f\in L^2(G)$:
\[
\abs{A\circ B f}(x) \leq a\conv \abs{B f}(x) \leq a\conv b\conv \abs{f}(x).
\]
From this and because $L^1(G)$ is an normed algebra under convolution
it is clear that
\[\norm[CD]{A\circ B}\leq \norm[CD]{A} \norm[CD]{B}.\]
To see that the involution preserves the space $CD$ take $a\in L^1(G)$
according to (\ref{def:cdop}) and
$f,h\in L^2(G)$:
\begin{eqnarray*}
\abs{\int_{G} A^{\ast}{h}(x)\,\overline{f(x)}\,dx}&=&\abs{ ( A^{\ast}h,f)}\\
&=&\abs{ ( h,Af)}\\
&=&\abs{ \int_{G} h(x) \overline{Af(x)}\,dx}\\
&\leq &\int_{G} \abs{h(x)}\abs{Af(x)}\,dx\\
&\leq & \int_{G} \abs{h(x)} \, a \conv \abs{f}(x)\,dx\\
&=& \int_{G} a^{\ast}\conv \abs{h}(x)\,\abs{f}(x)\,dx,
\end{eqnarray*}
where $a^{\ast}(x)=\overline{a(x^{-1})}\Delta(x^{-1})$ 
is the involution on $L^1(G)$, and here, since $a\geq 0$, 
$a^{\ast}(x)={a(x^{-1})}\Delta(x^{-1})$.
Hence, $\abs{A^{\ast}{h}(x)} \leq a^{\ast}\conv \abs{h}(x)$ 
locally almost everywhere for all $h\in L^2(G)$, and it follows that 
$A^{\ast}\in CD$ with 
$\norm[CD]{A^{\ast}} \leq %
\inf \{ \norm[1]{a} : (\ref{def:cdop}) \mbox{ holds true }\}=\norm[CD]{A}$.
\par
To show that $CD$ is a complete space we let $(A_i)_{i\in \NN}$
be a sequence in $CD$ with $\sum_{i\in \NN} \norm[CD]{A_i}$ convergent.
Then we find $a_i\in L^1(G)$
such that $\abs{A_{i}f(x)} \leq  a_{i} \conv \abs{f}(x),%
 \quad \mbox{for almost all } x\in G$ and 
$\norm[1]{a_i}\leq\norm[CD]{A_i} + 2^{-i}$.
For $A=\sum_{i\in \NN} A_i$ this sum is convergent in the space of bounded operators, hence for $f\in L^2(G)$ and for a subsequence of the partial sums
we have 
$Af(x)=\lim_k\sum_{i=1}^{j_k} \! A_if(x)$ almost everywhere.
So $ \abs{Af}(x)\leq \sum_{i\in \NN} \!  \abs{A_i f}(x)%
\leq \sum_{i\in \NN} a_i \conv \abs{f} (x) = a \conv\abs{f} (x)$.
This shows $A\in CD$ with $\norm[CD]{A}\leq \norm[1]{a}$.
\end{proof} 
\begin{prop}\label{prop:kernel}
For a given convolution dominated operator $A$ there
exists a locally integrable function
$F_A:G \times G \to \CC$ such that 
for all 
$f\in C_{cp}(G)$
\[
Af(x)=\int_G F_A (x,y) f(y) dy,\quad \mbox{almost everywhere.}
\]
\end{prop}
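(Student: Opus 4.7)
The dominating operator $T_a : f \mapsto a\conv f$ is itself a kernel operator. A change of variable in the Haar integral, substituting $z = xy^{-1}$ in $(a\conv f)(x) = \int a(z)f(z^{-1}x)\,dz$, gives
\[
(a\conv f)(x) = \int_G k(x,y)\,f(y)\,dy, \qquad k(x,y) := a(xy^{-1})\,\Delta(y^{-1}),
\]
and one checks that $k\in L^1_{\mathrm{loc}}(G\times G)$: for compact $U,V\subset G$, Fubini and left-invariance yield $\int_{V\times U} k(x,y)\,dx\,dy \leq \abs{V}\cdot\norm[1]{a} < \infty$.

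My first step is to introduce the complex set function $\mu$ defined on measurable rectangles $U\times V$ of finite Haar measure by
\[
\mu(U\times V):=\langle A\chi_V,\chi_U\rangle = \int_U A\chi_V(x)\,dx,
\]
and to observe that the domination hypothesis (\ref{def:cdop}) yields $\abs{\mu(U\times V)}\leq \int_{U\times V} k(x,y)\,dx\,dy$. Bilinearity of $(U,V)\mapsto\langle A\chi_V,\chi_U\rangle$ combined with a common-refinement argument extends $\mu$ additively to the ring of finite disjoint unions of rectangles, still dominated by $k\,dx\,dy$; the domination by the countably additive measure $k\,dx\,dy$ then upgrades finite additivity to countable additivity on the ring.

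Carath\'eodory extension promotes $\mu$ to a complex Borel measure on any compact rectangle $K\times L$, absolutely continuous with respect to $k\,dx\,dy$, and Radon--Nikodym supplies $F\in L^\infty(G\times G,\,k\,dx\,dy)$ with $\norm[\infty]{F}\leq 1$ such that $d\mu=F\cdot k\,dx\,dy$. Putting $F_A := F\cdot k$ gives $\abs{F_A}\leq k$, hence $F_A\in L^1_{\mathrm{loc}}(G\times G)$. By construction $\langle A\chi_V,\chi_U\rangle = \int_U\int_V F_A(x,y)\,dy\,dx$, which by bilinearity and density of simple functions in $L^2$ extends to $\langle Af,g\rangle = \int\int F_A(x,y) f(y) g(x)\,dy\,dx$ for all $f,g\in C_{cp}(G)$. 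Since the integrand is bounded by the integrable function $k(x,y)\abs{f(y)}\abs{g(x)}$, Fubini applies, and the arbitrariness of $g\in C_{cp}(G)$ yields $Af(x)=\int_G F_A(x,y)\,f(y)\,dy$ for almost every $x$.

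The main obstacle is the passage from the rectangle bound $\abs{\mu(U\times V)}\leq \int_{U\times V} k\,dx\,dy$ to a genuine countably additive complex measure on the product Borel $\sigma$-algebra: general bimeasures need not extend to product measures, but here the strong pointwise domination of $\mu$ by a $\sigma$-finite measure is exactly what is needed for the Carath\'eodory extension to go through. Once this is in place, the subsequent Radon--Nikodym step, approximation by simple functions, and Fubini argument are routine.
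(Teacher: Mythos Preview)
Your proof is correct and takes a genuinely different route from the paper. The paper works with the bilinear form $\langle Af,h\rangle$ on $C(K)\times C(K')$, establishes via a computation with an approximate identity that this form is bounded by the injective tensor norm, identifies $C(K)\injtensor C(K') = C(K\times K')$, invokes Riesz representation to obtain a Borel measure $\mu$, and only then checks absolute continuity with respect to Haar measure before applying Radon--Nikodym. Your argument bypasses the injective tensor product entirely: you go straight to the set function $\mu(U\times V)=\langle A\chi_V,\chi_U\rangle$, use bilinearity and the rectangle bound $\abs{\mu(U\times V)}\leq\int_{U\times V}k$ to get a finitely additive function on the ring of elementary sets, and let the domination by the $\sigma$-finite measure $k\,dx\,dy$ do the work of upgrading to countable additivity and feeding into Carath\'eodory. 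This is more elementary---no approximate identity, no tensor product identification---and it delivers the pointwise estimate $\abs{F_A}\leq k$ as an immediate byproduct of taking the Radon--Nikodym derivative with respect to $k\,dx\,dy$ rather than Haar measure, a bound the paper records only afterwards in a separate remark. Your explicit acknowledgement that the bimeasure-to-measure passage is the crux, and that domination by a countably additive measure is exactly what licenses it, is on point; without that domination the extension step would indeed fail in general. One cosmetic remark: the consistency of the locally defined $F_A$'s across overlapping compact rectangles, which the paper mentions as a ``standard procedure,'' is implicit in your global formulation via uniqueness of Radon--Nikodym derivatives, but you might state it.
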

\begin{proof}
let $K,K'\subset G$ be compact sets and consider on $C(K)\times C(K')$
the form
\[<Af,h> := \int_G Af(x)h(x) dx,\]
where $f$ and $h$ are extended as $L^2$-functions vanishing outside
$K$ respectively $K'$.
Take $\varphi\in C_{cp}(G)$, $\varphi \geq 0$,
then for $f_1, \ldots ,f_n \in C(K)$ and $h_1, \ldots , h_n \in C(K')$
\begin{eqnarray*}
\abs{\sum_{i=1}^n<\lambda(\varphi)\circ Af_i,h_i>} & = & %
\abs{\int_G \sum_{i=1}^n Af_i(x)(\varphi^{\ast}\conv h_i)(x)\,dx}\\
& = &\abs{\int_G\int_G %
A(\sum_{i=1}^n f_i h_i(z^{-1}))(x)\varphi^{\ast}(xz)\, dz dx}\\
& \leq & \int_G \int_G %
(a \conv \abs{\sum_{i=1}^n f_i h_i(z^{-1})}) (x) \varphi^{\ast}(xz)\, dzdx\\
& = & \int_{G} \int_{G} \int_{G} a(xy)\\
&&\quad\abs{\sum_{i=1}^nf_i(y^{-1}) h_i(z^{-1})} \varphi^{\ast}(xz)\,dy dzdx\\
& = & \int_{G} \int_{G} \int_{G} a(xy^{-1})\Delta(y^{-1})\cdot\\
& &\quad\abs{\sum_{i=1}^nf_i(y) h_i(z)} \varphi^{\ast}(xz^{-1})\Delta(z^{-1})%
\,dy dzdx\\
\end{eqnarray*}
\begin{eqnarray*}
\quad\quad\quad& \leq & \int_{G}%
\sup_{(y,z)\in K\times K'}\abs{\sum_{i=1}^nf_i(y) h_i(z)}\cdot\\
& &\quad\int_{K'}\int_{K}a(xy^{-1})\Delta(y^{-1})%
\varphi^{\ast}(xz^{-1})\Delta(z^{-1})%
\,dy dzdx\\
& \leq & \norm[1]{a}\sup_{(y,z)\in K\times K'}\abs{\sum_{i=1}^nf_i(y) h_i(z)}\cdot\\
& &\quad%
\int_{G}\int_{K'}\varphi^{\ast}(xz^{-1})\Delta(z^{-1})\,dzdx\\%
& = &\norm[1]{a}\sup_{(y,z)\in K\times K'}\abs{\sum_{i=1}^nf_i(y) h_i(z)}%
\abs{K'}\int_{G}\varphi(x)\,dx
\end{eqnarray*}
where $\Delta$ denotes the modular function of $G$,
$\varphi^{\ast}(z)=\varphi(z^{-1})\Delta(z^{-1})$ and we 
write $\abs{K'}$ for the left Haar measure of the set $K'$.
We take an approximate unit of $L^1(G)$ consisting of functions
$\varphi$ as above. Then we see 
that
\[
\abs{\sum_{i=1}^n<Af_i,h_i>}  \leq %
\abs{K'}\norm[1]{a}\sup_{(y,z)\in K\times K'}\abs{\sum_{i=1}^nf_i(y) h_i(z)}.
\]
\par
Hence 
$ f\tensor h \mapsto <Af,h>$
extends to a linear form on the injective tensor product
$C(k)\injtensor C(K')= C(K\times K')$, so there is a Borel measure
$\mu$ such that
for 
$f_1, \ldots ,f_n \in C(K)$ and $h_1, \ldots , h_n \in C(K')$
\[
\int_{G} \sum_{i=1}^nAf_i(z) h_i(z)\, dz=%
\int_{K}\int_{K'}\sum_{i=1}^n f_i(y) h_i(z) d\mu(y,z),
\]
and by a computation much similar to the first part of the above one
\[
\abs{\int_{K}\int_{K'}\sum_{i=1}^n f_i(y) h_i(z) d\mu(y,z)}
\leq 
\int_{G}\int_{G}a(zy^{-1})\Delta(y^{-1})\abs{\sum_{i=1}^n f_i(y) h_i(z)}\,%
dy dz.
\]
This last inequality extends to bounded measurable functions,
and if $H\subset K\times K'$ is the
characteristic function of a set of 
Haar measure $0$ in $G\times G$,
then by Fubinis theorem,
for almost all $z\in G$ the function $H(.\, ,z)$
vanishes off a set of measure zero.
The above estimate implies
\[0\leq \int H \, d\mu \leq %
\int_{G}\int_{G}a(zy^{-1})\Delta(y^{-1}){H}(y,z)\,dydz=0.\]
That is, $\mu$ is absolutely continuous with respect to
the Haar measure on $G\times G$, and by the Radon Nikodym theorem there
exists a kernel $F_A^{(K',K)}\in L^1(K\times K')$ such that
\[\int_{G} Af(x)h(x) \, dx = %
\int_{K'}\int_{K} F_A^{(K',K)}(x,y) f(y)h(x)\, dy dx,\]
whenever $f,h$ are continuous with support in $K$ resp. $K'$.
It is now a standard procedure to check the consistency of these
kernels for different pairs of compact sets so that they define a
locally integrable kernel $F_A$ on $G\times G$, which represents
$A$ as claimed in the proposition.
\end{proof}
\begin{rem}
In the above proof we have seen that in terms of the kernel $F_A$
of a convolution dominated operator $A$ the inequality
(\ref{def:cdop}) may be rewritten as
\begin{equation}\label{rem:cdpointwise}
F_A(x,y)\leq a(x,y^{-1})\Delta(y^{-1}),\quad  \mbox{ locally almost everywhere 
(l.a.e.) }
\end{equation}
\end{rem}
\begin{rem}
Conversely, if $F:G\times G \to \CC$ is a locally integrable
function, such that for some $a\in  L^1(G)$:
\[ \abs{F(x,y)}\leq a(xy^{-1})\Delta(y^{-1}),\]
then by 
\[Af(x)=\int_{G} F(x,y) f(y)\, dy\]
a bounded operator on $L^2(G)$ can be defined,
which clearly is dominated by $\lambda(a)$.
For this it suffices to check the Schur conditions
\[\essup_x \int \abs{F(x,y)}\,dy <\infty
\mbox{ and }
\essup_y \int \abs{F(x,y)}\,dx <\infty.\]
\end{rem}
\begin{rem}
The kernel of a convolution dominated operator satisfies the Schur conditions 
and hence represents the operator in the sense that for all
$f\in L^2(G)$ the following integral converges l.a.e. and
\[Af(x)=\int_{G} F_A(x,y) f(y)\, dy\quad \mbox{ l.a.e.}\]
\end{rem}
\par
Now let $a_i,\, i\in \NN$ be a sequence such that (\ref{def:cdnorm})
holds true for each $i$ and such that $\norm[1]{a_i}\leq \norm[cd]{A}+2^{-i}$.
Then, for $n\in \NN$, let $b_n=a_1\wedge \ldots \wedge a_n$,
where $\wedge$ denotes the operation of taking the pointwise minimum of
integrable functions. The functions $b_n$ are bounded below, and
form a decreasing sequence in $L^1(G)$. Thus they converge to $b$,
say. It is easily seen from (\ref{rem:cdpointwise}) that
this limit satisfies (\ref{def:cdop}), further $\norm[1]{b}=\norm[CD]{A}$.
We conclude:
\begin{rem}
The infimum in (\ref{def:cdnorm}) is attained.
\end{rem} 
\par
If $G$ is a discrete group then
an element $A\in CD$ may be  represented uniquely 
by its matrix with respect to
the basis given by the unit masses placed at the group elements
\[A(x,y)= (A \delta_y|\delta_x).\]
Denote  $m_z$ as the $z$-th
side-diagonal of the matrix. Then the matrix is the direct sum 
of its side diagonals and therefore
\[A= \sum_{z\in G} \lambda(z)\circ D^{m_z},\]
where $D^{m}\in B(l^2(G))$ is the multiplication operator
with $m\in l^{\infty}(G)$.
In \cite{fgl08}
this was used to show that 
\begin{equation}\label{eq:R}
R:\ell ^1(G,\ell ^{\infty}(G),T) \to B(\ell ^2(G))
\end{equation}
defined by
\begin{equation}
  \label{eq:cc6}
R:\sum _{z\in G} \delta_z \tensor m_z \mapsto \sum _{z\in G} \lambda(z)\circ D^{m_z}.
\end{equation}
is surjective from a certain twisted $L^1$-algebra onto $CD$.
In fact this map is an isometric $\ast$-algebra isomorphism.
We shall next define the twisted $L^1$-algebra and the map
$R$, which unfortunately is no longer surjective.
\par
Let $\Ruc(G)$ denote the space of bounded right uniformly continuous functions,
i.\ e.\ those $F\in L^{\infty}(G)$ such that 
$\essup_{x\in G} \abs{f(y^{-1}x) - f(x)} \to 0 \mbox{ as } y\to 0$. 
(This definition of right uniform continuity 
follows \cite[Ch. 3, 1.8(vi)]{rei67})
This space is just the subspace of $L^{\infty}$
of those elements on which left translation acts 
norm continuously. It is a closed subspace 
containing only continuous functions.
\par
For 
$y\in G$
denote $T_y$ left translation on $\Ruc(G)$, that is
$T_yn(z)=n(y^{-1}z)\; n\in Ruc(G)$.
We consider the map $T:y\mapsto T_y$ as a 
homomorphism of $G$ into the group of isometric automorphisms of the 
$C^{\ast}$-algebra $\Ruc(G)$,
which is continuous when the latter group 
is endowed with the strong operator topology.
With this homomorphism we form the twisted $L^{1}$-algebra
$\mathcal{L}=L^1(G,\Ruc(G),T)$ in
the sense of Leptin~\cite{lep67,lep67a,lep68}. The  underlying Banach
space of $\mathcal{L}$ is  the space of $\Ruc(G)$-valued 
Bochner integrable functions on $G$, but we will often interpret it  
as the projective tensor product 
\[L^1(G,\Ruc(G)) =  L^1(G) \, \projtensor \,  \Ruc(G).\]
Thus for an element $f\in L^1(G,\Ruc(G))$ we denote its value in
$\Ruc(G)$ by $f(x)$, $ x\in G$, and we  write $f(x)(z)$ or $f(x,z)$ for 
the value of this $\Ruc$-function at $z\in G$.
\par
The twisted convolution of $h,f \in \mathcal{L}$  is defined  by
\[
h \conv f (x) =  \int_{G} T_y h(xy) f(y^{-1})\,dy, \;\mbox{ for } x\in G \, ,
\]
and the  involution of $h\in \mathcal{L}$  by
\[
h^{\ast} (x) = \Delta(x^{-1})\overline{T_{x^{-1}} (h(x^{-1}))}, \;\mbox{ for } x\in
G\, .
\]
The properties of the projective tensor product ensure
that
$
R: a\tensor m \mapsto \lambda(a)\circ D^{m}
$ 
extends to a norm-nonincreasing linear map from
$L^1(G,\Ruc(G))$ to $B(L^2(G))$.
\begin{prop}\label{prop:R}
The map $R:L^1(G,\Ruc(G),T)\to B(L^2(G))$ is
an $\ast$-algebra homomorphism with range in $CD$.
It is isometric from $L^1(G,\Ruc(G),T)$ into $CD$.
\end{prop}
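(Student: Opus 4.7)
The plan is to verify everything on elementary tensors $f = a \otimes m$ first, and then extend by continuity, exploiting that elementary tensors are dense in $L^1(G) \projtensor \Ruc(G) = L^1(G,\Ruc(G))$. On an elementary tensor, I compute directly that
\[
R(a\tensor m) g(x) \;=\; \lambda(a) D^m g(x) \;=\; \int_G a(xz^{-1})\, m(z)\, g(z)\, \Delta(z^{-1})\, dz,
\]
so $R(a\tensor m)$ has integral kernel $F(x,z) = a(xz^{-1})\, m(z)\, \Delta(z^{-1})$. Since $|F(x,z)| \leq |a|(xz^{-1})\,\norm[\infty]{m}\,\Delta(z^{-1})$, the previous remark on the kernel form of convolution dominated operators shows $R(a\tensor m) \in CD$ with $\norm[CD]{R(a\tensor m)}\leq \norm[1]{a}\norm[\infty]{m}$. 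By the universal property of the projective tensor product, $R$ extends to a norm-nonincreasing map $L^1(G,\Ruc(G)) \to CD$, and the kernel of $R(f)$ for general $f$ is obtained by the same formula $F_{R(f)}(x,z) = f(xz^{-1})(z)\,\Delta(z^{-1})$, via continuity of the kernel assignment in the $CD$-norm.

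For the $\ast$-algebra homomorphism property, I verify $R(f)R(g) = R(f\conv g)$ and $R(f)^\ast = R(f^\ast)$ on elementary tensors and extend by continuity, using that the multiplication in $\mathcal{L}$ and composition in $B(L^2(G))$ are jointly continuous. The key algebraic fact is the covariance relation $\lambda(y)\,D^m\,\lambda(y^{-1}) = D^{T_y m}$, which is immediate from $\lambda(y)D^m\lambda(y^{-1})h(x) = m(y^{-1}x)h(x)$. Commuting $D^m$ past $\lambda(b)$ in the product $\lambda(a)D^m\lambda(b)D^n$ using this relation and comparing with Leptin's twisted convolution formula $f\conv g(x) = \int T_y f(xy)\, g(y^{-1})\,dy$ yields the product identity. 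For the involution, the formulas $(\lambda(a))^\ast = \lambda(a^\ast)$ with $a^\ast(x) = \overline{a(x^{-1})}\Delta(x^{-1})$, together with $(D^m)^\ast = D^{\overline m}$, match the prescribed $f^\ast(x) = \Delta(x^{-1})\overline{T_{x^{-1}}(f(x^{-1}))}$ when combined with the covariance relation.

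For the isometry assertion, I need to show $\norm[CD]{R(f)} \geq \norm[L^1(G,\Ruc(G))]{f}$. Given any $a \in L^1(G)$ for which (\ref{def:cdop}) holds for $R(f)$, the rewriting (\ref{rem:cdpointwise}) of convolution domination in terms of the kernel gives
\[
|f(xz^{-1})(z)|\,\Delta(z^{-1}) \;\leq\; a(xz^{-1})\,\Delta(z^{-1}) \quad \text{l.a.e.}
\]
Substituting $y = xz^{-1}$ and applying Fubini yields $|f(y)(z)| \leq a(y)$ for l.a.e. $(y,z) \in G\times G$, hence $\norm[\infty]{f(y)} \leq a(y)$ for almost every $y$; integrating gives $\norm[1]{a} \geq \int_G \norm[\infty]{f(y)}\,dy = \norm[L^1(G,\Ruc(G))]{f}$. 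Taking the infimum over such $a$ delivers the reverse inequality.

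The main obstacle is the Fubini/essential-supremum argument in the isometry step: I need $|f(y)(z)| \leq a(y)$ holding locally almost everywhere on $G\times G$ to be upgraded to $\norm[\infty]{f(y)} \leq a(y)$ locally almost everywhere on $G$, which uses that $f(y) \in \Ruc(G)$ is representable by a genuine continuous function (so its sup-norm coincides with the essential sup-norm of any measurable version) and that the Bochner integrability of $f$ makes $y \mapsto \norm[\infty]{f(y)}$ measurable with integral equal to $\norm[L^1(G,\Ruc(G))]{f}$. Once this measurability point is settled, the remaining verifications are routine continuity extensions from elementary tensors.
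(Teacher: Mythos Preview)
Your proof is correct and follows essentially the same route as the paper. The only notable difference is that for the $\ast$-homomorphism property the paper invokes Leptin's result \cite[Satz 3]{lep68} directly (since $(\lambda,D)$ is a covariant pair for the action $T$), whereas you verify the covariance relation and the product/involution identities on elementary tensors by hand; for the isometry step, both arguments reduce to the same pointwise inequality $|f(x)(y)|\leq a(x)$ and the passage to $\norm[\infty]{f(x)}\leq a(x)$, with the paper deriving it from the operator inequality applied to arbitrary $h\in L^2(G)$ while you invoke the kernel form~(\ref{rem:cdpointwise}).
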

\begin{proof}
Since $\lambda$ is a continuous unitary representation of $G$ on $L^2(G)$
and $D:m \mapsto D^m$ is a $\ast$-representation of $\Ruc$ in $B(L^2(G))$, 
with $\lambda(x)^{\ast}\circ D^m \circ \lambda(x) = D^{T_x^{-1}m}$,
$\forall x \in G ,\, m\in \Ruc$, \cite[Satz 3]{lep68} shows
 that $R$ defines a non-degenerate
$\ast$-representation of $\mathcal{L}$.
\par
For $a\tensor m \in L^1(G) \, \projtensor \,  \Ruc(G)$ and $h\in L^2(G)$
we have
\[
\abs{Rf(x)} = \abs{\lambda(a)(mh)(x)} \leq \norm[\infty]{m} \abs{a}\conv \abs{f}(x).
\]
This shows that $R$ maps into CD and does not increase the norm as a map
from $\mathcal{L}$ to $CD$.
\par
Now assume that for $f\in L^1(G,\Ruc(G),T)$ we have $a\in L^1(G)$
such that for all $h\in L^2(G)$: $\abs{Rf(h)}(x)\leq a\conv\abs{h}(x)$. Then
\begin{eqnarray*}
\int_G a(xy^{-1}) \Delta(y^{-1}) \abs{h(y)}\,dy &=&
a\conv\abs{h}(x) \\
&\geq &\abs{Rf(h)}(x)\\
&= &\abs{\int_G \lambda(y)(f(y)(.)h(.))(x)dy}\\
&=& \abs{\int_G f(y)(y^{-1}x)h(y^{-1}x)dy}\\
&=&\abs{\int_G f(xy^{-1})(y)\Delta(y^{-1})h(y)\; dy}.
\end{eqnarray*}
Then
$\abs{f(xy^{-1})(y)}\leq a(xy^{-1})\Delta(y^{-1})$ for almost all 
$(x,y) \in G \times G$, or
$\abs{f(x)(y)}\leq a(x)$ for almost all 
$(x,y) \in G \times G$.
It follows that
$\norm[\infty]{f(x)} \leq a(x)$ for almost all $x\in G$.
Hence 
\[\norm[L^1(G,\Ruc(G),T)]{f}=\int_G \norm[\infty]{f(x)}\,dx\leq \norm[1]{a}.\]
As a consequence: $R:L^1(G,\Ruc(G),T) \to CD$ is isometric.
\end{proof}
\begin{defi}\label{def:cdreg}
Elements in the image of $R$ we call regular convolution dominated operators,
and denote the whole image by $CD_{reg}$.
\end{defi}
The continuous functions vanishing at infinity $C_0(G)$ are a closed two-sided
 ideal in
$\Ruc(G)$ and as is easily seen this implies that 
$L^1(G,C_0(G),T)$ is a closed two-sided ideal in $L^1(G,\Ruc(G),T)$.
\begin{rem}
It follows from \cite[Theorem 4]{LP79} that $L^1(G,C_0(G),T)$
is simple and symmetric. The representation $\rho$ in the beginning of their
proof, we denoted it by $R$, maps $L^1(G,C_0(G),T)$ into an ideal (in $CD_{reg}$)
of
compact operators. Moreover the operator norm closure
of $R(L^1(G,C_0(G),T))$ equals the compact operators~\cite{lep79}.
\end{rem}

\section{Symmetry of the twisted $L^1$-algebra}

In this section we shall show that the twisted $L^1$-algebra
$L^1(G,\Ruc(G),T)$ is a symmetric Banach $\ast$-algebra.
To this end we shall first recall a criterion for the symmetry of a 
Banach $\ast$-algebra.
\par
\begin{defi}\label{def:preunitary}
Let $E$ be a normed linear space and $\mathcal{A}$ be a Banach $\ast$-algebra.
A representation $\rho:\mathcal{A}\to \END(E)$ is called
preunitary if there exists a Hilbert
space $H$ and a bounded $\ast$-representation $\pi:\mathcal{A}\to B(H)$
together with an injective and bounded  operator
$U:E\to H$ intertwining $\rho$ and $\pi$.
\[
U\circ \rho(a)= \pi(a)\circ U,\quad \forall a \in \mathcal{A}.
\]
\end{defi}
\begin{rem}
If $\rho$ is a contractive representation of $A$ on a Banach space $E$
then we are given a Banach $A$ module in the sense of Leptin.
The representation is preunitary in the above sense if the Banach
$A$ module is preunitary in the sense of Leptin~\cite{Lep77}.
\end{rem}
It is clear that the image, under $U$, of a 
$\rho(\mathcal{A})$-invariant subspace is invariant under
$\pi(\mathcal{A})$. We may and do assume that $U(E)$ is dense in H.\\
\par
The following question appears naturally.
If $\rho$ is an algebraically irreducible, preunitary
representation, can the representation $\pi$ in the definition
be chosen topologically irreducible?
The answer is positive:
\begin{prop}
Let $A$ be a Banach $\ast$-algebra
with approximate identity. 
If $\rho : A \to B(E)$ is an  algebraically irreducible,
preunitary representation of $A$ 
then there is a topologically irreducible 
 $\ast$ representation $\pi$
extending  $\rho$.
\end{prop}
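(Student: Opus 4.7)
The plan is to reduce the given data to the GNS representation of a suitable pure state. Fix a nonzero $\xi_0 \in E$ and write $\eta_0 = U\xi_0$, where I denote the preunitary data by $(\pi_0, U)$ (reserving the letter $\pi$ for the irreducible representation to be produced), with $U : E \to H$ injective, bounded and $U(E)$ dense in $H$ (by replacing $H$ with $\overline{U(E)}$, which is $\pi_0$-invariant and whose orthogonal complement is also invariant since $\pi_0$ is a $\ast$-representation). Algebraic irreducibility of $\rho$ gives $\rho(A)\xi_0 = E$, so $\pi_0(A)\eta_0 = U(E)$ is dense in $H$; in particular $\eta_0$ is a cyclic vector for $\pi_0$.

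I would then consider the continuous positive functional $\phi(a) = \langle \pi_0(a) \eta_0, \eta_0 \rangle$ on $A$. Because $U$ is injective,
\[
N_\phi := \{a \in A : \phi(a^* a) = 0\} = \{a : \rho(a) \xi_0 = 0\} =: L,
\]
and the approximate-identity hypothesis combined with the algebraic irreducibility of $\rho$ makes $L$ a maximal modular left ideal: $a + L \mapsto \rho(a)\xi_0$ is an algebraic isomorphism $A/L \cong E$, and any $u \in A$ with $\rho(u)\xi_0 = \xi_0$ serves as a right identity modulo $L$.

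The crux of the argument is to extract a pure state $\psi$ on $A$ with $\psi \leq C\phi$ for some $C > 0$, by a Krein--Milman argument applied to the weak-$\ast$ compact convex set of positive functionals dominated by $\phi$ (suitably normalized via the approximate identity, so that its extreme points land in the pure state space). Once such a $\psi$ is at hand, $\psi \leq C\phi$ forces $L = N_\phi \subseteq N_\psi$, and purity of $\psi$ implies $N_\psi$ is itself a maximal modular left ideal, so in fact $L = N_\psi$. The GNS representation $(\pi_\psi, H_\psi, \Omega_\psi)$ of $\psi$ is then topologically irreducible, and
\[
U' : \rho(a)\xi_0 \longmapsto \pi_\psi(a) \Omega_\psi
\]
is a well-defined, injective intertwiner $E \to H_\psi$ with dense image $\pi_\psi(A) \Omega_\psi$; boundedness is immediate from $\|U' \rho(a)\xi_0\|^2 = \psi(a^* a) \leq C \phi(a^* a) \leq C \|U\|^2 \|\rho(a)\xi_0\|^2$. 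Setting $\pi := \pi_\psi$ produces the required topologically irreducible $\ast$-representation extending $\rho$.

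The main obstacle is the pure-state extraction, together with the two Banach $\ast$-algebraic facts it rests on: that for a pure state $\psi$ the set $N_\psi$ is a maximal modular left ideal, and that its GNS representation is topologically irreducible. Both are classical for $C^*$-algebras but in the Banach $\ast$-algebra setting depend genuinely on the approximate-identity hypothesis; once they are in hand, the rest of the argument is bookkeeping around the key identification $L = N_\psi$.
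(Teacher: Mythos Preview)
Your overall strategy—produce a pure state $\psi$ with $N_\psi = L$ and then read off the desired irreducible $\ast$-representation from its GNS construction—is exactly the paper's. The difference lies in how the pure state is found, and this is where your argument has a genuine gap.

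You propose to obtain $\psi$ by applying Krein--Milman to the order interval $[0,\phi]=\{f\geq 0: f\leq \phi\}$. But the nonzero extreme points of this set are well understood: via the Radon--Nikodym correspondence $f\leftrightarrow T$ with $T\in\pi_0(A)'$, $0\leq T\leq I$, they correspond precisely to the projections in the commutant $\pi_0(A)'$. Such a projection yields a pure state only when it is \emph{minimal}, and nothing in the hypotheses forces the von Neumann algebra $\pi_0(A)'$ to possess minimal projections. So the assertion that a pure state $\psi\leq C\phi$ exists is not established by the argument you sketch; your parenthetical about normalization does not help, since rescaling cannot turn a non-minimal projection into a minimal one.

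The paper circumvents this by passing to the enveloping $C^{\ast}$-algebra $C$ of $A$. One first checks that the closure $\overline{L}$ of $L$ in $C$ is proper (here is where $\phi$ is actually used: it is continuous for the maximal $C^{\ast}$-seminorm and vanishes on $L$), then enlarges $\overline{L}$ to a maximal modular left ideal $N$ in $C$, and finally invokes the $C^{\ast}$-algebraic fact (Dixmier, 2.9.5) that $N=\{b\in C:\psi(b^{\ast}b)=0\}$ for some pure state $\psi$ on $C$. Maximality of $L$ in $A$ forces $N\cap A=L$, so the restriction of $\psi$ to $A$ has left kernel exactly $L$, and its GNS representation is the sought topologically irreducible extension. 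In short, the paper trades your Krein--Milman step on $[0,\phi]$ for a Zorn/maximal-ideal step performed in the $C^{\ast}$-hull, where the dictionary between maximal modular left ideals and pure states is classical and needs no extra hypothesis on the commutant.

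One side remark: once you have \emph{any} pure state $\psi$ with $N_\psi=L$, the boundedness of the intertwiner $U'$ does not require the stronger inequality $\psi\leq C\phi$. For every $m\in L$ one has $\psi\bigl((a+m)^{\ast}(a+m)\bigr)=\psi(a^{\ast}a)$ by Cauchy--Schwarz, whence $\psi(a^{\ast}a)\leq\|\psi\|\,\inf_{m\in L}\|a+m\|^{2}$, and this already bounds $\|U'\rho(a)\xi_0\|$ in terms of the quotient norm on $A/L\cong E$.
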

\begin{proof}
The representation $\rho$ is  preunitary, hence
its kernel is a $\ast$-ideal, and possibly
replacing $A$ by $A/\mbox{kern}(\rho)$ we may assume that 
$\rho$ is faithful.
Since $\rho : A \to B(E)$ is algebraically irreducible
there is a maximal modular left ideal $M \subset A$,
with modular right unit $u$, such that 
$E$ and $A/M$ are algebraically isomorphic, and $\rho$
appears as left multiplication on $A/M$. For $b\in A$ denote $\overline{b}$
its class in $A/M$ and let
\[
(\;~\; |\;~\;):A/M \times A/M \to\CC
\]
be the positive sesquilinear form given by
\[
(\overline{b}|\overline{c}) = (U(\overline{b})|U(\overline{c}))_{H}.
\]
The functional
\[
\phi(a) = (a\overline{u}|\overline{u}) = 
(\pi(a)U(\overline{u})|U(\overline{u}))_H ,\quad a\in A
\]
is non-trivial, positive and continuous with respect to
the maximal $C^{\ast}$-norm of $A$.
\par
Let $C$ denote the $C^{\ast}$-hull of $A$
and $\overline{M}$ the closure of $M$ in $C$.
For $a\in M$ we have $\phi(a)=0$ hence $\overline{M}\subset \mbox{kern}(\phi)$,
so $\overline{M}\noteq C$. Let $N \supset M$ be a proper maximal modular
left ideal in $C$ containing $M$. 

First we claim that $N\cap A= M$.
By definition of $N$ we only have to show
$N\cap A\subset M$. Clearly $N\cap A$ is a left ideal in $A$, and
$u$ a right modular unit. Note that $N\cap A$ is a proper
ideal in $A$ since $N$ is a proper ideal in $C$.
By the maximality of $M$ it follows that $N\cap A\subset M$.
\par
Now by \cite[2.9.5]{Dix79} there is a pure state $\psi$ on $C$ such that
$N=\{\,b\in C\,:\, \psi(b^{\ast}b)=0\,\}$.
Hence $M=N\cap A= \{\,a \in A\,:\,\psi(a^{\ast}a)=0\,\}$.
So the GNS representation of $A$ constructed from the (pure state)
$\psi_{|A}$ is a topologically irreducible extension of $\rho$
containing $E$ as a dense invariant subspace. 
\end{proof}
\par
Since an algebraically irreducible representation of a Banach $\ast$-algebra is
equivalent to a contractive one, we may use from \cite{lep76a}:
\begin{thm}\label{theorem:preunitary}
A Banach $\ast$-algebra is symmetric if and only if
all its non-trivial algebraically irreducible representations are preunitary.
\end{thm}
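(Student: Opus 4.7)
The argument splits into two directions, of which $(\Leftarrow)$ is essentially elementary while $(\Rightarrow)$ is the substantial part. For $(\Leftarrow)$, I assume every non-trivial algebraically irreducible representation of $A$ is preunitary and argue that every self-adjoint $h\in A$ has real spectrum. Passing to the unitization if necessary, suppose $\lambda\notin\RR$ lies in $\spec{h}$, so $h-\lambda e$ is not invertible. Since $h$ is self-adjoint, either $h-\lambda e$ or $(h-\lambda e)^{\ast}=h-\bar{\lambda}e$ fails to have a left inverse; assume the first (the other case is symmetric). Then $A(h-\lambda e)$ is a proper left ideal, which I extend to a maximal modular left ideal $M$, so that $(h-\lambda e)\in M$. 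The associated left-multiplication representation $\rho$ on $E=A/M$ is algebraically irreducible and by hypothesis preunitary: pick an injective bounded $U:E\to H$ and a bounded $\ast$-representation $\pi:A\to B(H)$ with $U\rho(a)=\pi(a)U$. Since $\pi(h)$ is bounded self-adjoint on $H$, its spectrum is real and $\pi(h-\lambda I)$ is invertible in $B(H)$. The intertwining relation, combined with injectivity of $U$ and invertibility of $\pi(h-\lambda I)$, forces $\ker\rho(h-\lambda e)=0$; but $(h-\lambda e)\in M$ yields $\rho(h-\lambda e)[e]=[h-\lambda e]=0$ while $[e]\neq 0$, a contradiction.

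For $(\Rightarrow)$, I assume $A$ is symmetric and let $\rho:A\to\END(E)$ be a non-trivial algebraically irreducible representation, so $E\cong A/M$ with $M$ a maximal modular left ideal and modular right unit $u$. The goal is to manufacture a Hilbert space $H$, a bounded $\ast$-representation $\pi$, and an injective bounded intertwiner $U:E\to H$. My plan is as follows. First, I will use the structural fact that in a symmetric Banach $\ast$-algebra every primitive ideal is a $\ast$-ideal, so $P=\ker\rho$ is $\ast$-closed and $A/P$ is again symmetric and primitive. Second, I will construct a pure state $\psi$ on $A$ whose GNS left kernel $\{a\in A:\psi(a^{\ast}a)=0\}$ coincides with $M$; this is the algebra-level analog of the extension argument used in the preceding proposition, where a positive functional was extended from a subalgebra to the $C^{\ast}$-hull and then refined to a pure state. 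Third, form the GNS representation $\pi_{\psi}:A\to B(H_{\psi})$, which is topologically irreducible because $\psi$ is pure. Finally, I take $U:A/M\to H_{\psi}$ to be the canonical inclusion of $A/M$ into its completion under the inner product $(a,b)\mapsto\psi(b^{\ast}a)$; this map is injective by the choice of $\psi$, bounded because positive functionals on a symmetric Banach $\ast$-algebra are automatically continuous, and intertwining since both $\rho$ and $\pi_{\psi}$ act by left multiplication on $A/M$.

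The main obstacle is the second step: producing a pure state $\psi$ with exactly the prescribed support $M$. This is the genuinely analytic heart of the argument and is where the symmetry of $A$ is essentially used; it combines the Shirali--Ford type fact that self-adjoint elements have real spectrum, automatic continuity of positive functionals on a symmetric Banach $\ast$-algebra, and a Hahn--Banach / Krein--Milman separation to locate an extremal positive functional annihilating $M$. Once $\psi$ is in hand, the remaining ingredients reduce to routine manipulations with the GNS construction and with the correspondence between pure states and topologically irreducible $\ast$-representations. It is worth noting that the $(\Leftarrow)$ direction exploits the proposition proved just before Theorem \ref{theorem:preunitary} only implicitly: the bounded $\ast$-representation $\pi$ furnished by preunitarity need not itself be topologically irreducible, but that stronger property is exactly what the $(\Rightarrow)$ direction delivers.
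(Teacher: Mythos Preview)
The paper does not prove Theorem~\ref{theorem:preunitary}; it is quoted from Leptin~\cite{lep76a} as a known result, so there is no in-paper argument against which to compare your proposal.

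Assessing your sketch on its own merits: the $(\Leftarrow)$ direction as written establishes only that $A$ is \emph{hermitian} (every self-adjoint element has real spectrum), whereas the paper's definition of symmetry is $\spec{a^{\ast}a}\subset[0,\infty)$. Bridging these requires the Shirali--Ford theorem, which is certainly not ``essentially elementary''. The fix is easy: run the same argument with $h=a^{\ast}a$ and a real $\lambda<0$, so that $\pi(a^{\ast}a-\lambda e)=\pi(a)^{\ast}\pi(a)+|\lambda|I$ is invertible in $B(H)$ and the identical contradiction ensues without appeal to Shirali--Ford. A smaller point: when you pass to the unitization you must check that the preunitarity hypothesis, stated for $A$, transfers to $A^{+}$, and that the $\ast$-representation $\pi$ may be taken unital on $H$; both are routine but should be said.

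For $(\Rightarrow)$ your outline is reasonable and in the spirit of Leptin's argument, but the step you yourself flag as the main obstacle---producing a pure state $\psi$ whose GNS left kernel equals the given maximal modular left ideal $M$---is the substantive content of the theorem and is not carried out. In the proposition preceding Theorem~\ref{theorem:preunitary} this step succeeded because preunitarity handed you a positive functional continuous for the $C^{\ast}$-norm and vanishing on $M$, whence the closure of $M$ in the $C^{\ast}$-hull stayed proper. Here symmetry alone must furnish such a functional, and that is exactly what remains to be proved; until it is, the $(\Rightarrow)$ argument is incomplete.
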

\par
Following the concept used in the discrete case~\cite{fgl08}
one would like to define a map
\[
Q:L^1(G,\Ruc(G),T)\to L^1(G)\projtensor B(L^2(G))
\]
by
\[
Q(f) = \{x\mapsto \lambda(x)\circ D^{f(x)}\}.
\]
But this does not work since  $Q(f)$ is not Bochner measurable because
$\lambda:G\to B(L^2(G))$, is strongly continuous, but not norm continuous.
\par
The problem can be worked around by showing, completely analogously to 
\cite{pog92}, that algebraically irreducible representations
of certain twisted $L^1$-algebras of $G$ remain irreducible
when ''restricted`` to the discretised group.
\par
So, as before let $G$ be a locally compact group, $\mathcal{A}$ a Banach 
$\ast$-algebra, with isometric involution and a left approximate identity.
Further we assume that $T:G\to \Aut(\mathcal{A})$ is a continuous homomorphism 
from $G$ into the 
group of $\ast$-automorphisms of 
$\mathcal{A}$, where $\Aut(\mathcal{A})$  is endowed with the strong
operator topology, i.e.\ $y\mapsto T_y a$ is continuous
from $G$ to $\mathcal{A}$, for all $a\in \mathcal{A}$.
With these data we form the twisted $L^1$-algebra
(as above) $\mathcal{L}=L^1(G,\mathcal{A},T)$.
\par
Let $E$ be a nontrivial linear space and let $\rho:\mathcal{L}\to B(E)$ 
be a non-trivial algebraically irreducible representation on it.
Given $\xi_0\in E$, $\xi_0\neq 0$, one has a norm on $E$:
\begin{equation}\label{eq:norm}
\norm[E]{\xi}= \inf \{\norm[\mathcal{L}]{f}\, :%
\, f\in \mathcal{L},\; \rho(f)\xi_0=\xi \},
\end{equation}
with respect to which $E$ is a complete space. In fact it is the 
quotient of $\mathcal{L}$ with respect to the maximal modular left ideal
$\{ f\in \mathcal{L}\,:\; \rho(f)\xi_0=0 \}$.
Different $\xi\in E$ define different but equivalent norms.
\par
As in the proof of \cite[Satz 3]{lep68},
$\rho:G \to B(E)$ and
$\rho:\mathcal{A}\to B(E)$  are representations  of the group
respectively of the 
Banach algebra $\mathcal{A}$.
The operators 
$\rho(x)$, $x\in G$, are isometries and
$\rho$ does not increase norms.
Here the operators $\rho(x), \, x\in G$ and $\rho(a),\,a\in \mathcal{A}$ 
do not necessarily commute, 
but we have the relation
\[
\rho(y^{-1})\rho(a)\rho(y)=\rho(T_{y^-1}a),
\]
and furthermore for all $f\in L^1(G,\mathcal{A},T)$, $\xi\in E$
\[
\rho(f)\xi = \int_G \rho(x) \rho(f(x)) \xi \, dx.
\]
Unfortunately the proof in \cite{lep68} is done with the hypothesis of 
dealing with a $\ast$-representation. Apart from some 
algebraic identities the main ingredient is \cite[Satz 2]{lep68}
in its consequence (1.2) loc.cit.\ .
\par 
Now we take the group $G$ with the discrete topology, denote it $G_d$.
We form the twisted $L^1$-algebra $\mathcal{L}_d:=l^1(G,\mathcal{A},T)$,
and define a representation of it on $E$ by
\[
\rho_d(h)\xi = \sum_{x\in G} \rho(x) \rho(h)\xi,%
\quad \xi\in E, \; h\in \mathcal{L}_d.
\]
\begin{lem}\label{lem:discrete}
Let $\rho$ be an algebraically irreducible representation of 
$L^1(G,\mathcal{A},T)$
and assume the above settings.
Then the representation $\rho_d:l^1(G,\mathcal{A},T)\to B(E)$
is algebraically irreducible.
\end{lem}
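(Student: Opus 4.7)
The goal is to show that for any nonzero $\xi_0\in E$ and every $\eta\in E$ there exists $h\in l^1(G,\mathcal{A},T)$ with $\rho_d(h)\xi_0=\eta$. Equip $E$ with the norm induced by $\xi_0$ as in \eqref{eq:norm}, so that $\rho$ is contractive on $\mathcal{L}$, each $\rho(x)$ is an isometry of $E$, and $f\mapsto\rho(f)\xi_0$ is a norm-decreasing surjection $\mathcal{L}\to E$. Following the strategy of Poguntke \cite{pog92}, the idea is to approximate $\rho(f)\xi_0$ by Riemann sums that lie in $\rho_d(\mathcal{L}_d)\xi_0$, and then to invoke the standard surjectivity criterion that a bounded linear map between Banach spaces whose image is norm-dense in some ball of the target is automatically surjective.

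The first preparatory step is to verify that $y\mapsto\rho(y)\xi$ is strongly continuous on $E$. For $\xi=\rho(f)\xi_0$ with $f\in\mathcal{L}$, the change of variable $x\mapsto y^{-1}x$ in $\rho(y)\rho(f)\xi_0=\int_G\rho(yx)\rho(f(x))\xi_0\,dx$ yields $\rho(y)\rho(f)\xi_0=\int_G\rho(x)\rho(f(y^{-1}x))\xi_0\,dx=\rho(f_y)\xi_0$, where $f_y(x)=f(y^{-1}x)$ depends continuously on $y$ in $L^1$-norm by the classical continuity of left translation on $L^1(G,\mathcal{A})$. Since $\rho(\mathcal{L})\xi_0=E$ by algebraic irreducibility, strong continuity extends to all of $E$. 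Combined with the norm continuity of $\rho|_\mathcal{A}$ on bounded sets, this yields joint continuity of $(x,a)\mapsto\rho(x)\rho(a)\xi_0$.

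Now fix $\eta\in E$ of norm at most one and $\varepsilon>0$. By the definition of the quotient norm \eqref{eq:norm} there is $f\in\mathcal{L}$ with $\rho(f)\xi_0=\eta$ and $\|f\|_\mathcal{L}\le 1+\varepsilon$. Approximate $f$ in $\mathcal{L}$-norm by some continuous compactly supported $\mathcal{A}$-valued function $f'$. The integrand $\Phi(x):=\rho(x)\rho(f'(x))\xi_0$ is then a continuous $E$-valued function with compact support $K$. Choose a measurable partition $K=\bigsqcup_i U_i$ of sufficiently small mesh and points $x_i\in U_i$; then $\rho(f')\xi_0=\int_G\Phi(x)\,dx$ is approximated in $E$ by
\[
\sum_i\mu(U_i)\,\Phi(x_i)=\rho_d(h)\xi_0,\qquad h=\sum_i\mu(U_i)\,\delta_{x_i}\tensor f'(x_i)\in\mathcal{L}_d,
\]
and the $l^1$-norm of $h$ is at most $\sum_i\mu(U_i)\|f'(x_i)\|_\mathcal{A}$, a Riemann sum for $\|f'\|_\mathcal{L}$. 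Chaining the two approximations, the closure of $\rho_d(B_{\mathcal{L}_d}(0,C))\xi_0$ contains the unit ball of $E$ for some absolute constant $C$; a standard open-mapping iteration then produces $h'\in\mathcal{L}_d$ with $\rho_d(h')\xi_0=\eta$ exactly, proving algebraic irreducibility of $\rho_d$.

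The main obstacle is the Riemann-sum step: one must control $\|\Phi(x)-\Phi(x_i)\|_E$ uniformly on each $U_i$, which requires the equicontinuity of $\Phi$ over the compact set $K$ and therefore hinges on the joint continuity statement established in the second paragraph. Here the subtle point, compared with the analogous argument for $\ast$-representations on Hilbert spaces, is that we work in a Banach space with the non-canonical quotient norm from \eqref{eq:norm}; strong continuity of $y\mapsto\rho(y)$ on all of $E$ is not evident from the definition of $\rho_d$ and must be derived from the covariance relation and the continuity of translation in $L^1(G,\mathcal{A},T)$. Once these continuity facts are in place, the remaining density and open-mapping steps are routine, and the proof assembles to the desired conclusion.
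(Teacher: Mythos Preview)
Your proposal is correct and follows essentially the same route as the paper (both adapting Poguntke \cite{pog92}): approximate $f\in\mathcal{L}$ by a compactly supported continuous $\mathcal{A}$-valued function, replace the integral $\int_G\rho(x)\rho(f(x))\xi_0\,dx$ by a Riemann sum to obtain an element of $\rho_d(\mathcal{L}_d)\xi_0$ with controlled $\ell^1$-norm, and then upgrade approximate surjectivity to exact surjectivity by the standard iterative open-mapping-type argument. The only cosmetic differences are that the paper writes out the iteration explicitly and takes the strong continuity of $y\mapsto\rho(y)\xi$ on $E$ as given, whereas you derive it from the identity $\rho(y)\rho(f)\xi_0=\rho(f_y)\xi_0$ and the continuity of left translation in $L^1(G,\mathcal{A})$.
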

\begin{proof}
We follow the proof of \cite[Theorem 2]{pog92}.
Assume that $E'\subset E$ is a non-trivial $\rho_d(\mathcal{L}_d)$
invariant subspace; we have to show that $E' = E$.
To this end we take a fixed nonzero $\xi_0\in E'$,
and the corresponding norm, see (\ref{eq:norm}), on $E$.
\begin{claim}
We claim that for $\xi\in E$ and $\varepsilon>0$
there is
$h\in l^1(G,\mathcal{A},T)$ such that
\begin{equation}\label{eq:approx}
\norm[E]{\rho_d(h)\xi_0 -\xi}\leq\varepsilon%
\mbox{ and }
\norm[\mathcal{L}_d]{h}\leq \norm[E]{\xi} +\varepsilon.
\end{equation}
\end{claim}
The claim implies the assertion of the lemma.
For, if $\eta\in E$ is given we have to find $h\in \mathcal{L}_d$
such that $\rho_d(h)\xi_0 = \eta$, and this is done inductively as follows:
First we find $h_1\in\mathcal{L}_d$ such that
\begin{equation*}
\norm[E]{\rho_d(h_1)\xi_0 -\eta}\leq 2^{-1}%
\mbox{ and }
\norm[\mathcal{L}_d]{h_1}\leq \norm[E]{\eta} +2^{-1}.
\end{equation*}
If $h_1,\ldots,h_n$ are already defined with
\begin{equation*}
\norm[E]{\sum_{i=1}^{n}\rho_d(h_i)\xi_0 -\eta}\leq 2^{-n}
\end{equation*}
and
\begin{equation*}
\norm[\mathcal{L}_d]{h_i}\leq \norm[E]{\sum_{j=1}^{i-1}\rho_d(h_j)\xi_0 -\eta} +2^{-i}, \quad i=1,\ldots,n,
\end{equation*}
then we choose $h_{n+1}\in \mathcal{L}_d$ such that
\begin{equation*}
\norm[E]{\rho_d(h_{n+1})\xi_0-\left(\eta-\sum_{i=1}^{n}\rho_d(h_i)\xi_0\right)}\leq 2^{-(n+1)}
\end{equation*}
and
\begin{equation*}
\norm[\mathcal{L}_d]{h_{n+1}}\leq \norm[E]{\left(\sum_{i=1}^{n}\rho_d(h_i)\xi_0 -\eta\right)} +2^{-(n+1)}.
\end{equation*}
Since 
\[
\norm[\mathcal{L}_d]{h_i}\leq \norm[E]{\sum_{j=1}^{i-1}\rho_d(h_j)\xi_0 -\eta} +2^{-i}\leq 2^{i-1}+2^{-i}
\mbox{ for }i\geq 2,
\] the sum $\sum_{1}^{\infty} h_i$, call it $h$,
exists in $\mathcal{L}_d$, and
\[
\norm[E]{\rho_d(h)\xi_0 -\eta} = \lim_{n\to \infty}\norm[E]{\sum_{i=1}^{n}\rho_d(h_i)\xi_0 -\eta}=0.
\]
It remains to establish the claim.
So let $\delta>0$ be a positive real number to be determined later.
By definition of the norm on $E$ we find
$f\in L^1(G,\mathcal{A},T)$, with
\[
\rho(f)\xi_0=\xi \mbox{ and } \norm[\mathcal{L}]{f}< \norm[E]{\xi} +\delta.
\]
Since the space of continuous, compactly supported, $\mathcal{A}$
valued functions
$C_{cp}(G,\mathcal{A})$
is dense in $L^1(G,\mathcal{A},T)$
we find, in turn, a function $f_1\in C_{cp}(G,\mathcal{A})$
such that
\[\norm[E]{\rho(f_1)\xi_0 -\xi}<\delta%
\mbox{ and }
\norm[\mathcal{L}]{f_1}< \norm[E]{\xi} +\delta.
\]
Denote $S$ the support of $f_1$ and $\abs{S}$ its Haar measure.
Since $\rho$ is a strongly continuous representation of $G$
and since $f$ is uniformly continuous, there is a 
neighbourhood $U$ of the identity such that
\[
\norm[E]{\rho(u)\xi_0 -\xi_0}<\delta,\; \forall u\in U \mbox{ and } \]
\[\norm[\mathcal{A}]{f_1(xu) -f_1(x)}<\delta\abs{S}^{-1},\; \forall x\in G,\,u\in  U.
\]
As $S$ is compact it can be covered by finitely many translates
$x_1U,\ldots , x_mU$ of $U$. We make this covering disjoint
$V_1:=x_1U\cap S$ and inductively $V_k:=(x_kU\cap S)\setminus \cup_{j<k} V_j$.
The $V_j$ are measurable pairwise disjoint subsets of $S$,
hence $\sum_1^m \abs{V_j}\leq \abs{S}$.
\par
Now let $a_j=f_1(x_j)\in \mathcal{A}$, $j=1,\ldots,m$, 
and $f_2\in L^1(G,\mathcal{A},T)$ be given by
\[
f_2= \sum_{j=1}^{m} a_j \chi_{V_j}.
\]
Then
\begin{eqnarray*}
\norm[L^1(G,\mathcal{A},T)]{f_2}&\leq& 
\norm[L^1(G,\mathcal{A},T)]{f_2-f_1}+\norm[L^1(G,\mathcal{A},T)]{f_1}\\
&\leq&\sum_{j=1}^{m}\int_{V_j}\norm[\mathcal{A}]{f_1(x)-f_1(x_j)}\, dx
+ \norm[E]{\xi} +\delta\\
&\leq&\sum_{j=1}^{m} \abs{V_j}\delta\abs{S}^{-1}+ \norm[E]{\xi} +\delta\\
&\leq&\norm[E]{\xi} +2\delta.
\end{eqnarray*}
The desired  $h\in l^1(G,\mathcal{A},T)$ is defined by
\[
h=\sum_{j=1}^{m} a_j \abs{V_j} \delta_{x_j}.
\]
Then
\begin{eqnarray*}
\norm[l^1(G,\mathcal{A},T)]{h}&=&\sum_{j=1}^{m} \norm[\mathcal{A}]{a_j} \abs{V_j}
\;=\;\norm[L^1(G,\mathcal{A},T)]{\sum_{j=1}^{m} a_j \chi_{V_j}}\\
&\leq&\norm[E]{\xi} +2\delta.
\end{eqnarray*}
Moreover,
\[
\norm[E]{\rho_d(h)\xi_0-\xi}\leq\norm[E]{\rho_d(h)\xi_0-\rho(f_2)\xi_0}%
+\norm[E]{\rho(f_2)\xi_0-\xi}
\]
The second term can be estimated by
\begin{eqnarray*}
\norm[E]{\rho(f_2)\xi_0-\rho(f_1)\xi_0}+\norm[E]{\rho(f_1)\xi_0-\xi}
&\leq& \norm[L^1(G,\mathcal{A},T)]{f_1 -f_2}\norm[E]{\xi_0}+ \delta\\ 
&\leq& \delta\norm[E]{\xi_0}+\delta.
\end{eqnarray*}
For the first  term we use that $\rho:\mathcal{A}\to B(E)$ is bounded by one
and that $s\in V_j$ can be written $s=x_j u$ with $u\in U$:
\begin{eqnarray*}
\norm[E]{\rho_d(h)\xi_0-\rho(f_2)\xi_0}&\leq&%
\sum_{j=1}^{m}\norm[E]{%
\rho(a_j)\abs{V_j}\rho(x_j)\xi_0 - \int_{V_j} \rho(a_j)\rho(s)\xi_0 \,ds}\\
&\leq&\sum_{j=1}^{m}\norm[\mathcal{A}]{a_j}%
\norm[E]{\int_{V_j}\rho(x_j)\xi_0 - \rho(s)\xi_0\,ds}\\
&\leq&\sum_{j=1}^{m}\norm[\mathcal{A}]{a_j}%
\int_{V_j}\norm[E]{\rho(x_j)\xi_0 - \rho(s)\xi_0}\,ds\\
&\leq&\sum_{j=1}^{m}\norm[\mathcal{A}]{a_j}\int_{V_j}\delta\,ds\\
&=&\delta \norm[l^1]{f_2}\;\leq\;\delta(\norm[E]{\xi}+2\delta)
\end{eqnarray*}
Altogether we found $h\in l^1(G,\mathcal{A},T)$ such that
\[\norm[l^1(G,\mathcal{A},T)]{h}\leq\norm[E]{\xi} +2\delta
\mbox{ and }
\norm[E]{\rho_d(h)\xi_0-\xi}\leq\delta(\norm[E]{\xi}+2\delta);
\]
taking $\delta$ small enough now proves the claim.
\end{proof}
From here onward we assume that $\mathcal{A}$ is a $C^\ast$-algebra 
and that the operators $T_y$ are isometries. We recall that we assumed
that they  preserve the involution:
$T_y (a^{\ast}) = (T_y a)^{\ast}$, $a\in \mathcal{A},\,y\in G$.
Before discussing the symmetry of $L^1(G,\mathcal{A},T)$
we shall first look at the discretised version.
\par
So let $D:\mathcal{A}\to B(\mathcal{H})$ be a faithful $\ast$-representation
of $\mathcal{A}$ on some Hilbert space $\mathcal{H}$.
We define a map 
\begin{equation}\label{eq:Q}
Q:\ell ^1(G_d,\mathcal{A},T) \to \ell ^1(G_d) \projtensor B(\mathcal{H})
\end{equation}
by 
\begin{equation}
  \label{eq:c23}
  f=\sum_v \delta_v \otimes m_v \mapsto \sum_v \delta_v \otimes
  T_{v}\circ D(m_v)\, .
\end{equation}
\begin{prop}\label{p:2}
The  map $Q$ is an isometric $\ast$-iso\-mor\-phism of 
$\ell ^1(G_d,\mathcal{A},T)$ onto a closed $\ast$-subalgebra of $l^1(G_d) \projtensor
B(\mathcal{H})$.
\end{prop}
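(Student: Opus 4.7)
The plan is to first make precise the meaning of the symbol $T_v\circ D(m_v)\in B(\mathcal{H})$ in the definition of $Q$. Since $T_v$ is a priori an automorphism of $\mathcal{A}$, one must first construct a covariant unitary implementation. Given the faithful $\ast$-representation $D:\mathcal{A}\to B(\mathcal{H})$, enlarge the Hilbert space to $\tilde{\mathcal{H}}=\ell^2(G_d,\mathcal{H})$, put $\tilde D(a)\xi(v)=D(T_{v^{-1}}a)\xi(v)$ and $(U_w\xi)(v)=\xi(w^{-1}v)$. Then $\tilde D$ is again a faithful $\ast$-representation, each $U_w$ is unitary, and a direct check yields the covariance relation $U_w\tilde D(a)U_w^{-1}=\tilde D(T_wa)$ for all $w\in G$, $a\in\mathcal{A}$. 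Renaming $\tilde D$ by $D$, we interpret $T_v$ in the formula defining $Q$ as the unitary $U_v$, so that $Q(\sum_v\delta_v\otimes m_v)=\sum_v\delta_v\otimes U_vD(m_v)$.

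Next, I would verify that $Q$ is a $\ast$-algebra homomorphism on the dense span of elementary tensors, and then extend by continuity. A direct computation using the twisted convolution from the excerpt gives $(\delta_v\otimes m)\star(\delta_w\otimes n)=\delta_{vw}\otimes T_{w^{-1}}(m)\,n$ (in the discrete case $\Delta\equiv1$). On the target side, $\ell^1(G_d)\projtensor B(\mathcal{H})$ is identified isometrically with $\ell^1(G_d,B(\mathcal{H}))$ carrying ordinary convolution in the group variable and composition in the operator variable, so that $(\delta_v\otimes U_vD(m))(\delta_w\otimes U_wD(n))=\delta_{vw}\otimes U_vD(m)U_wD(n)$. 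Using covariance in the form $D(m)U_w=U_wD(T_{w^{-1}}m)$, this product collapses to $\delta_{vw}\otimes U_{vw}D(T_{w^{-1}}(m)\,n)$, matching $Q$ applied to the twisted product. The involution check is analogous: $(\delta_v\otimes m)^{\ast}=\delta_{v^{-1}}\otimes T_v(m^{\ast})$ in the twisted algebra, and applying $Q$ together with one use of covariance gives $\delta_{v^{-1}}\otimes D(m^{\ast})U_{v^{-1}}$, which is exactly $(U_vD(m))^{\ast}=D(m)^{\ast}U_v^{-1}$ placed at $v^{-1}$, i.e.\ $Q(\delta_v\otimes m)^{\ast}$.

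For isometry, I would use the standard identification $\ell^1(G_d)\projtensor B(\mathcal{H})\cong \ell^1(G_d,B(\mathcal{H}))$, under which the norm of $\sum_v\delta_v\otimes A_v$ equals $\sum_v\norm[B(\mathcal{H})]{A_v}$. Since the $U_v$ are unitaries and $D$ is a faithful $\ast$-representation of the $C^{\ast}$-algebra $\mathcal{A}$ (and hence isometric), one has $\norm[B(\mathcal{H})]{U_vD(m_v)}=\norm[\mathcal{A}]{m_v}$, whence
\[
\norm[\ell^1(G_d)\projtensor B(\mathcal{H})]{Q(f)}=\sum_v\norm[\mathcal{A}]{m_v}=\norm[\ell^1(G_d,\mathcal{A},T)]{f}.
\]
Since $Q$ is isometric, it is injective and its image is automatically complete, hence closed in $\ell^1(G_d)\projtensor B(\mathcal{H})$; as the image of a $\ast$-homomorphism it is also a $\ast$-subalgebra.

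The main (small) obstacle is the one already addressed in the first paragraph: the expression $T_v\circ D(m_v)$ in (\ref{eq:c23}) only makes literal sense once a covariant implementation of $T$ by unitaries on the representation space has been fixed. Once the regular-representation trick on $\ell^2(G_d,\mathcal{H})$ is used to produce such an implementation without losing faithfulness of $D$, all the compatibilities (multiplicativity, involution, norm) follow from one-line applications of the covariance identity $D(m)U_w=U_wD(T_{w^{-1}}m)$.
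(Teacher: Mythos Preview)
Your proof is correct and follows the same route as the paper: verify multiplicativity and $\ast$-compatibility on elementary tensors using the covariance relation, then use the identification $\ell^1(G_d)\projtensor B(\mathcal{H})\cong\ell^1(G_d,B(\mathcal{H}))$ together with faithfulness of $D$ to get isometry, and conclude closedness of the range.

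The one point on which you go beyond the paper is your opening paragraph. The paper writes ``$T_v\circ D(m_v)$'' as if $T_v$ were already an operator on $\mathcal{H}$, and its norm computation $\norm[B(\mathcal{H})]{T_v\circ D(m_v)}=\norm[\mathcal{A}]{m_v}$ tacitly assumes a unitary implementation of $T$. You make this explicit by passing to $\ell^2(G_d,\mathcal{H})$ with the amplified representation $\tilde D$ and the left-regular unitaries $U_w$, checking that $\tilde D$ stays faithful and that $U_w\tilde D(a)U_w^{-1}=\tilde D(T_wa)$. This is a genuine clarification rather than a different argument: once the covariant pair $(U,\tilde D)$ is in hand, your computations of $Q(h\star f)=Q(h)Q(f)$ and $Q(f^\ast)=Q(f)^\ast$ are line-for-line the same as the paper's, only with $U_v$ written where the paper writes $T_v$.
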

\begin{proof}
The proof rests on the isometric identification 
$l^1(G,E)=l^1(G)\, \projtensor \,  E$, which holds for any Banach space $E$~\cite[Ch.~VIII.1.10]{diuh77}. 
It follows that
for $f=\sum_v \delta_v\otimes m_v \in l^1(G,\mathcal{A},T)$
\begin{eqnarray*}
\norm[1]{f} &=& \sum_v \norm[\mathcal{A}]{m_v}\; = \; 
\sum_v\norm[B(\mathcal{H})]{T_{v}\circ D(m_v)}\\
&=& \norm[\ell ^1(G)\projtensor B(\mathcal{H})] {\sum_v \delta_v\otimes T_{v}\circ D(m_v)}.
\end{eqnarray*}
Thus $Q$ is an isometry. 
Let $h=\sum_v \delta_v\otimes n_v$,  then 
\[
h\conv f = \sum_v \delta_v\otimes l_v,
\]
where
$l_v= \sum _{y\in G} (T_y n_{vy} )m_{y^{-1}}$. Hence
\begin{eqnarray*}
Q(h\conv f) &=& \sum_v \delta_v\otimes T_vD({l_v})\\
&=&\sum_v \delta_v\otimes\sum_{\{z,w:zw=v\} }T_zD({n_z}) T_wD({m_w})\\
&=& \sum_{z,w }\delta_z \delta_w \otimes T_zD({n_z})T_w D({m_w})\\
&=&(\sum _{z\in G} \delta_z \otimes T_zD({n_z}) )%
(\sum_w\delta_w \otimes T_wD({m_w}))%
\;=\; Q(h)  Q(f)\, .
\end{eqnarray*}
Similarly one computes that $Q$ intertwines the involutions.
In fact
\begin{eqnarray*}
  Q(f)^{\ast}&=&\sum_v \delta_v^{\ast}\otimes (T_{v}\circ D(m_v))^{\ast}\\
&=&\sum_v \delta_{v^{-1}}\otimes T_{v^{-1}}D({T_v {m_v}^\ast})\\
&=&\sum_{v^{-1}} \delta_{v}\otimes T_{v}D({T_{v^{-1}}
  {m_{v^{-1}}}^\ast})  = Q(f^*) \, .
\end{eqnarray*}
Thus $Q$ is a $\ast $-homomorphism. Since $Q$ is an  isometry, the
image of $Q$ is a closed subalgebra of 
$l^1 (G_d)\, \projtensor \, B(\mathcal{H})$. 
\end{proof}
Because symmetry is inherited by   closed subalgebras,  we   obtain the
following consequence. 
\begin{cor}\label{cor:1}
Let $G$ be a discrete rigidly symmetric group, $\mathcal{A}$ a $C^*$-algebra.
 Then 
 $l^1(G_d,\mathcal{A},T)$ is a symmetric Banach $\ast$-algebra.
Especially any of its algebraically irreducible representations is preunitary.
\end{cor}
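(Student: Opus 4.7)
The plan is to assemble the corollary from ingredients already in place. Since $\mathcal{A}$ is a $C^{\ast}$-algebra, the Gelfand--Naimark theorem supplies a faithful $\ast$-representation $D:\mathcal{A}\to B(\mathcal{H})$ on some Hilbert space $\mathcal{H}$, which is exactly the data needed to form the map $Q$ of equation~(\ref{eq:Q}). Proposition~\ref{p:2} then identifies $\ell^{1}(G_d,\mathcal{A},T)$ isometrically with a closed $\ast$-subalgebra of $\ell^{1}(G_d)\projtensor B(\mathcal{H})$.

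Now $B(\mathcal{H})$ is itself a $C^{\ast}$-algebra, so the assumption that $G$ is rigidly symmetric (as a discrete group) applies verbatim: by definition the projective tensor product $\ell^{1}(G_d)\projtensor B(\mathcal{H})$ is a symmetric Banach $\ast$-algebra. Since symmetry is inherited by closed $\ast$-subalgebras (the point stressed in the paragraph immediately preceding the corollary), the image $Q\bigl(\ell^{1}(G_d,\mathcal{A},T)\bigr)$ is symmetric, and pulling back through the isometric $\ast$-isomorphism $Q$ gives symmetry of $\ell^{1}(G_d,\mathcal{A},T)$ itself.

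Finally, the preunitarity of every nontrivial algebraically irreducible representation is an immediate application of Theorem~\ref{theorem:preunitary}: that theorem characterizes symmetry of a Banach $\ast$-algebra exactly by this representation-theoretic condition, so one direction of it furnishes the second assertion. There is no genuine obstacle here; the corollary is essentially the conjunction of rigid symmetry of $G_d$, the embedding produced by Proposition~\ref{p:2}, and the general principle that closed $\ast$-subalgebras of symmetric Banach $\ast$-algebras (with isometric involution) are again symmetric, followed by one appeal to Theorem~\ref{theorem:preunitary}.
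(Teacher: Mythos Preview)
Your proposal is correct and follows exactly the approach the paper intends: the paper states the corollary immediately after Proposition~\ref{p:2} with only the single remark ``Because symmetry is inherited by closed subalgebras, we obtain the following consequence,'' and your argument simply makes explicit the chain (Gelfand--Naimark for $D$, the isometric $\ast$-embedding $Q$, rigid symmetry of $G_d$, inheritance by closed $\ast$-subalgebras, and then Theorem~\ref{theorem:preunitary} for the preunitarity clause). Nothing is missing or differently routed.
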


\begin{thm}
If $G$ is rigidly symmetric as a discrete group, and if 
$\mathcal{A}$ is a $C^\ast$-algebra, then
$L^1(G,\mathcal{A},T)$ is symmetric.
\end{thm}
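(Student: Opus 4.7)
The plan is to combine Theorem~\ref{theorem:preunitary} with Lemma~\ref{lem:discrete} and Corollary~\ref{cor:1} to transport the symmetry already known for $l^1(G_d,\mathcal{A},T)$ up to the continuous algebra $L^1(G,\mathcal{A},T)$. By Theorem~\ref{theorem:preunitary} it suffices to show that every non-trivial algebraically irreducible representation $\rho:L^1(G,\mathcal{A},T)\to B(E)$ is preunitary. Given such a $\rho$, Lemma~\ref{lem:discrete} provides the associated representation $\rho_d$ of $l^1(G_d,\mathcal{A},T)$ on the same space $E$, which is again algebraically irreducible. Corollary~\ref{cor:1} then furnishes (via another application of Theorem~\ref{theorem:preunitary}) a Hilbert space $H$, a bounded $\ast$-representation $\pi_d:l^1(G_d,\mathcal{A},T)\to B(H)$, and an injective bounded operator $U:E\to H$ with dense range such that $U\rho_d(h)=\pi_d(h)U$ for every $h\in l^1(G_d,\mathcal{A},T)$.

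The task then is to upgrade the discrete object $\pi_d$ to a bounded $\ast$-representation $\pi$ of the continuous algebra $L^1(G,\mathcal{A},T)$ on the same Hilbert space $H$, still intertwined with $\rho$ via $U$. I would disintegrate $\pi_d$ into its covariance pair: set $D_\pi(a):=\pi_d(\delta_e\tensor a)$, which yields a $\ast$-representation of $\mathcal{A}$ on $H$, and $v(x):=\pi_d(\delta_x\tensor 1_{\mathcal{A}})$ (using adjunction of a unit, or an approximate-unit limit, if $\mathcal{A}$ is non-unital), which yields a unitary homomorphism of $G_d$ into $B(H)$. The twisted multiplication in $l^1(G_d,\mathcal{A},T)$ forces $\pi_d(\delta_x\tensor a)=v(x)D_\pi(a)$ together with the covariance identity $v(x)D_\pi(a)v(x)^{\ast}=D_\pi(T_x a)$, inherited from the corresponding identity in the twisted algebra.

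The main obstacle is the transfer of continuity from $E$ to $H$; the discrete representation $\pi_d$ carries no continuity of $v$ on $G$ a priori. Here the intertwining relation is decisive. Evaluating $U\rho_d(h)=\pi_d(h)U$ on $h=\delta_x\tensor a$ and using $\rho_d(\delta_x\tensor a)=\rho(x)\rho(a)$ yields $v(x)D_\pi(a)U=U\rho(x)\rho(a)$; passing to an approximate unit in $\mathcal{A}$ (or directly with $1_{\mathcal{A}}$) gives $v(x)U=U\rho(x)$. Since $x\mapsto\rho(x)\xi$ is continuous on $E$ and $U$ is bounded, $x\mapsto v(x)\eta$ is continuous on the dense subspace $U(E)\subset H$, and the uniform bound $\|v(x)\|\leq 1$ extends this to all of $H$. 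Thus $(v,D_\pi)$ is a genuine continuous covariant representation of the dynamical system $(G,\mathcal{A},T)$, and its integrated form
\[
\pi(f)\eta=\int_G v(x)D_\pi(f(x))\eta\,dx,\quad f\in L^1(G,\mathcal{A},T),\ \eta\in H,
\]
defined as a strong Bochner integral, is a bounded $\ast$-representation. The identity $\rho(f)\xi=\int_G\rho(x)\rho(f(x))\xi\,dx$ recorded in the paragraph preceding Lemma~\ref{lem:discrete} then gives $U\rho(f)=\pi(f)U$ for every $f\in L^1(G,\mathcal{A},T)$, establishing that $\rho$ is preunitary and hence, by Theorem~\ref{theorem:preunitary}, that $L^1(G,\mathcal{A},T)$ is symmetric.
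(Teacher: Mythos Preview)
Your proposal is correct and follows essentially the same route as the paper: reduce to preunitarity via Theorem~\ref{theorem:preunitary}, use Lemma~\ref{lem:discrete} and Corollary~\ref{cor:1} to obtain the discrete data $(H,\pi_d,U)$, disintegrate $\pi_d$ into a covariant pair $(v,D_\pi)$, push the strong continuity of $x\mapsto\rho(x)$ through $U$ to get continuity of $v$ on $H$, and integrate. The only point the paper treats more carefully is the boundedness of $U$ with respect to the norm $\norm[E]{\,\cdot\,}$ coming from $L^1(G,\mathcal{A},T)$ (as opposed to the discrete norm $\norm[d]{\,\cdot\,}$ naturally produced by Corollary~\ref{cor:1}): they argue that both norms make $E$ complete and hence are equivalent, whereas you invoke boundedness of $U$ directly from Definition~\ref{def:preunitary}---this is fine once one notes that $\rho_d$ is contractive on $(E,\norm[E]{\,\cdot\,})$, but it is worth making explicit.
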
 
\begin{proof}
We shall verify that a non-trivial algebraically 
irreducible representation
$\rho:L^1(G,\mathcal{A},T)\to \END(E)$ is preunitary. We know
that its discretised version
$\rho_d:l^1(G_d,\mathcal{A},T)\to \END(E)$ is preunitary too.
So let $H$ be the Hilbert space, $\pi_d$ the $\ast$- representation of
$l^1(G_d,\mathcal{A},T)$ on it, and $U$ the intertwining operator
according to the definition~\ref{def:preunitary}. 
Since $E$ is a complete space with respect
to the norm 
\[
\norm[d]{\xi}= \inf \{\norm[l^1(G_d,\mathcal{A},T)]{h}\, :%
\, h\in l^1(G_d,\mathcal{A},T),\; \rho(h)\xi_0=\xi \},
\]
as well as with respect to $\norm[E]{}$, these norms are equivalent.
Furthermore, 
$h\mapsto \pi_d(h) U \xi_0 = U \rho_d(h) \xi_0$
is bounded from 
$l^1(G_d,\mathcal{A},T)$ to $H$. These two facts show that $U : E \to H$
is  bounded.
\par
From \cite[Satz 3]{lep68}
we know that there is a unitary representation
$\pi : G_d \to B(H)$ and a $\ast$-representation
(again denoted by the same letter) $\pi:\mathcal{A}\to B(H)$
such that
\[
\pi(x^{-1})\pi(a)\pi(x)=\pi(T_{x^{-1}} a),\quad%
 \forall x\in G,\, a\in \mathcal{A}
\]
and
\[
\pi_d(h) = \sum_{x\in G} \pi(x)\pi(h(x)),\quad%
\forall h\in l^1(G_d,\mathcal{A},T).
\]
\par
Since $\rho:G\to B(E)$ is a continuous representation
and $U$ a continuous intertwining operator
it follows that by means of $\pi$ the group
$G$ acts continuously on the image $U(E)$ in $H$.
As this subspace is dense in $H$ and $\pi$ is a bounded representation
we infer that this action is continuous on the whole space $H$.
This allows to define a representation
$\pi$ of $\mathcal{L}=L^1(G,\mathcal{A},T)$ on $H$ by
\[
\pi(f)\eta = \int_G \pi(x)\pi(f(x))\eta \,dx\,\quad \eta\in H, f\in \mathcal{L}.
\]
\par
It is easily checked that this formula defines
a $\ast$-representation of $\mathcal{L}$ on $H$,
and that $U$ intertwines $\pi$ and $\rho$.
\end{proof}
\begin{cor}
If $G$ is rigidly symmetric as a discrete group
then the twisted algebra 
$L^1(G,\Ruc(G),T)$ is a symmetric Banach $\ast$-algebra.
\end{cor}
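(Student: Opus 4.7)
The plan is to apply the preceding theorem with $\mathcal{A}=\Ruc(G)$, so essentially all the work has been done and only a verification of hypotheses remains.

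First I would check that $\Ruc(G)$ is a $C^{\ast}$-algebra. It is by definition a norm-closed subspace of $L^{\infty}(G)$, and since pointwise multiplication, complex conjugation and the sup-norm all commute with left translation, $\Ruc(G)$ is in fact closed under multiplication and involution. As a closed $\ast$-subalgebra of the commutative $C^{\ast}$-algebra $L^{\infty}(G)$ (with sup norm) it is itself a (commutative) $C^{\ast}$-algebra.

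Next I would verify that the left translation action $T:G\to\Aut(\Ruc(G))$ satisfies the hypotheses required in the preceding theorem. Each $T_y$ is an isometric $\ast$-automorphism, because left translation preserves pointwise multiplication, complex conjugation and sup norm. That $y\mapsto T_y$ is continuous from $G$ into $\Aut(\Ruc(G))$ with the strong operator topology is precisely the defining property of $\Ruc(G)$: by definition $\Ruc(G)$ consists of those $L^{\infty}$-elements on which left translation acts norm-continuously, so for every $n\in\Ruc(G)$ the map $y\mapsto T_y n$ is continuous from $G$ to $\Ruc(G)$. This was recorded explicitly in the construction of the twisted $L^{1}$-algebra $\mathcal{L}=L^{1}(G,\Ruc(G),T)$ earlier in the paper.

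With these hypotheses in hand, the preceding theorem applied to $\mathcal{A}=\Ruc(G)$ yields directly that $L^{1}(G,\Ruc(G),T)$ is symmetric. There is no genuine obstacle: the corollary is simply the specialization of the general theorem to the particular $C^{\ast}$-algebra $\Ruc(G)$ with its left-translation action. The only thing one must be mindful of is that the strong-operator continuity of $T$ on $\Ruc(G)$ is built into the definition of $\Ruc(G)$ itself, which is exactly the reason for working with $\Ruc(G)$ rather than with all of $L^{\infty}(G)$.
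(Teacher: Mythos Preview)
Your proposal is correct and matches the paper's approach exactly: the corollary is the immediate specialization of the preceding theorem to $\mathcal{A}=\Ruc(G)$, and the paper therefore gives no separate proof. You have simply spelled out the routine verifications (that $\Ruc(G)$ is a $C^{\ast}$-algebra and that left translation acts on it by strongly continuous isometric $\ast$-automorphisms) that the paper leaves implicit.
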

\begin{rem}
\begin{itemize}
\item[(i)]
Locally compact nilpotent groups are rigidly symmetric,
even as discrete groups~\cite[Corollary 6]{pog92}.
\item[(ii)]
If for the settings of
~\cite[Theorem 5]{LP79}
we choose $G=H=\RR$ with the action $\omega :(s,x)\mapsto x^s:=e^sx$
of the additive group $\RR$ on itself and $\mathcal{D}=C_0(\RR)$,
then we see that $L^1(\RR,\mathcal{D})$, with the trivial action
of $\RR$ on $\mathcal{D}$
 is symmetric, whereas  
$L^1(\RR,L^1(\RR,\mathcal{D}),\tilde{T})$, with action
$$\tilde{T}(s)f(x)(\cdot) = e^{-s}f(\omega(-s,x))(\omega(s,\cdot))
\in \mathcal{D}, \quad
f\in L^1(\RR,\mathcal{D}), s,x\in \RR$$
is not symmetric. Hence in the above theorem it is necessary to 
assume that $\mathcal{A}$ is a $C^*$-algebra and not only a symmetric Banach-$*$-algebra. See the example in section ~\ref{sec:example}.
\item[(iii)]
A special role in the theory of symmetry of 
group algebras is played by the group of affine mappings of the real line,
 the ``$ax+b$''-group, denoted $\AF$. This group has a symmetric $L^1$-algebra~\cite{lep76},
 but
its discretised version is not symmetric~\cite{jen69}. We do not know about
the rigid symmetry of the continuous group. 
\end{itemize}
\end{rem}
\section{Spectral Invariance of $CD_{reg}$}
We shall show that the spectrum of an element of
$CD_{reg}$ is the same no matter if it is
considered as a bounded operator 
on $L^2(G)$ or as an element of $CD_{reg}$.
To this end we define two representations
of $L^1(G,\Ruc(G),T)$ and first show that they are weakly equivalent.
The first representation is $R:L^1(G,\Ruc(G),T)\to CD_{reg}\subset B(L^2(G)$,
which we call the canonical representation.
The second one is the $D$-regular representation
$\lambda^D:L^1(G,\Ruc(G),T)\to B(L^2(G,L^2(G))$,
acting on 
$L^2(G,L^2(G))$.
\begin{prop}
The representations $\lambda^D$ and $R$ of
$L^1(G,\Ruc(G),T)$ are weakly equivalent, i.e.\
$\norm[B(L^2(G,L^2(G))]{\lambda^D(f)}=\norm[B(L^2(G)]{R(f)}$.
\end{prop}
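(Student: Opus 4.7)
My plan is to prove the equality $\|\lambda^{D}(f)\|=\|R(f)\|$ by establishing the two one-sided inequalities separately, using two complementary tools. Both $R$ and $\lambda^{D}$ are integrated forms of covariant representations of the twisted system $(\Ruc(G),G,T)$: the canonical $R$ on $L^{2}(G)$ is the integrated form of the covariant pair $(D,\lambda)$ whose group part is \emph{already} the left regular representation of $G$, while $\lambda^{D}$ on $L^{2}(G,L^{2}(G))$ is the representation induced from~$D$, with the standard explicit formula
\[
\lambda^{D}(f)\Psi(x,z)=\int_{G}(T_{x^{-1}}f(y))(z)\,\Psi(y^{-1}x,z)\,dy
=\int_{G}f(y)(xz)\,\Psi(y^{-1}x,z)\,dy.
\]
That the two representations have the same operator norm is an instance of Fell's absorption principle in this twisted crossed product setting, made possible by the special form of~$R$'s group part.

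For $\|\lambda^{D}(f)\|\le\|R(f)\|$ I would exhibit a unitary $V$ on $L^{2}(G\times G)$ with $V\,\lambda^{D}(f)\,V^{-1}=R(f)\otimes I_{L^{2}(G)}$, from which $\|\lambda^{D}(f)\|=\|R(f)\otimes I\|=\|R(f)\|$ follows. The candidate $V$ is a composition of the diagonal-twist unitary $U\Psi(x,z)=\Psi(x,x^{-1}z)$---which converts the twisted action $(T_{x^{-1}}m)(z)\Psi(x,z)$ into the separated tensor $I\otimes D^{m}$ and simultaneously converts $\tilde\lambda(y)\Psi(x,z)=\Psi(y^{-1}x,z)$ into the diagonal $\lambda(y)\otimes\lambda(y)$---with the classical Fell absorption unitary $W\Psi(x,z)=\Psi(x,xz)$ implementing $\lambda\otimes\lambda\cong\lambda\otimes I$, possibly combined with a flip of the two tensor factors to place the~$D$ factor on the correct leg. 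After these conjugations the integrated form of $\lambda^{D}(f)$ should collapse to $R(f)\otimes I$.

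For the reverse direction $\|R(f)\|\le\|\lambda^{D}(f)\|$ I would use a matrix-coefficient argument with an approximate identity. Given $h_{1},h_{2}\in L^{2}(G)$, choose a unit-norm sequence $(\alpha_{n})\subset L^{2}(G)$ concentrating at the identity of~$G$ (so that $\langle\lambda(y)\alpha_{n},\alpha_{n}\rangle\to 1$ uniformly on compacta), and set $\Psi_{i}^{(n)}(x,z)=h_{i}(xz)\alpha_{n}(x)$; by left-invariance of Haar measure in $z$, $\|\Psi_{i}^{(n)}\|=\|h_{i}\|$. A direct computation with the change of variable $w=xz$ together with Fubini yields
\[
\langle\lambda^{D}(f)\Psi_{1}^{(n)},\Psi_{2}^{(n)}\rangle
=\iint f(y)(w)\,h_{1}(y^{-1}w)\,\overline{h_{2}(w)}\,\langle\lambda(y)\alpha_{n},\alpha_{n}\rangle\,dy\,dw,
\]
and since $y\mapsto\|f(y)\|_{\infty}$ lies in $L^{1}(G)$, dominated convergence lets me pass $n\to\infty$ under the integral and recover the matrix coefficient $\langle R(f)h_{1},h_{2}\rangle$ (after the substitution $u=y^{-1}w$). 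Passing to the supremum over $h_{1},h_{2}$ gives the inequality.

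The main technical obstacle is in the Fell absorption step. One must carefully track the covariance twist $T_{x^{-1}}$, the modular function $\Delta$ of the possibly non-unimodular group $G$, and the order of composition of $\lambda(y)$ and $D^{m}$ in the definition of~$R$, in order to verify that the composition of $U$, Fell's~$W$, and the flip really intertwines $\lambda^{D}(f)$ with $R(f)\otimes I$ rather than with a related operator such as $(D^{m}\lambda(a))\otimes I$, which differs by the order of composition and would need a further unitary correction. The algebraic bookkeeping is delicate but routine once the candidate unitaries are in hand.
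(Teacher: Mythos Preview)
Your core strategy coincides with the paper's: show that $\lambda^{D}$ is unitarily equivalent to $R^{\omega}:=R(\cdot)\otimes I$ on $L^{2}(G\times G)$, whence $\|\lambda^{D}(f)\|=\|R(f)\otimes I\|=\|R(f)\|$. The paper, however, does not decompose the intertwiner into Fell-absorption pieces; it writes down a \emph{single} unitary $S\xi(x,z)=\xi(xz,z)$ and verifies directly that $\lambda^{D}(f)\,S=S\,R^{\omega}(f)$ for all $f\in\mathcal{L}$. This avoids the bookkeeping you flag as delicate and immediately gives equality of norms, so the paper has no need for your second (matrix-coefficient) argument, which is redundant once the unitary equivalence is in hand.

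Two specific points of caution about your execution. First, with the paper's conventions the integrated $D$-regular representation reads
\[
\lambda^{D}(f)\Psi(x,z)=\int_{G}(T_{x^{-1}y}f(y))(z)\,\Psi(y^{-1}x,z)\,dy=\int_{G}f(y)(y^{-1}xz)\,\Psi(y^{-1}x,z)\,dy,
\]
not $\int_{G}f(y)(xz)\,\Psi(y^{-1}x,z)\,dy$; your version corresponds to the opposite ordering $\pi(f)=\int\pi(f(y))\pi(y)\,dy$, and with it your matrix-coefficient computation does not reproduce $\langle R(f)h_{1},h_{2}\rangle$ but rather $\int\!\!\int f(y)(w)h_{1}(y^{-1}w)\overline{h_{2}(w)}\,dy\,dw$, which differs from $R$ by the placement of the multiplier. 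Second, as written your candidate unitaries satisfy $W\circ U=\mathrm{Id}$ (since $(U\Psi)(x,xz)=\Psi(x,x^{-1}xz)=\Psi(x,z)$), so the composition is trivial; the flip you mention is not optional. Rather than repairing that decomposition, it is simpler to guess the single intertwiner $S$ directly and check it, exactly as the paper does.
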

\begin{proof}
We identify $L ^2(G,L ^2(G))$ with $L ^2(G\times G)$. Let
$R^\omega $ be the extension of $R$ from $L ^2(G)$ to $L^2(G\times G)$
  by letting the operators 
$R(f)$, 
$f\in \mathcal{L}$, 
act in the first coordinate only, i.e., for $\xi \in L ^2(G\times
G)$ 
\begin{equation}
  R^{\omega}(f) \xi (x,z) =\int_{G} f(y)(y^{-1}x)\xi(y^{-1}x,z)\, dy.
\end{equation}
Next we  define
a  candidate for an intertwining operator between the  $D$-regular
representation and the  $\card{G}$-multiple $R^\omega $ of the canonical representation
by
\[
S\xi(x,z)=\xi(xz,z),\;\mbox{ where }\xi\in L ^2(G\times G).
\]
Then on the one hand we have 
\[
S[R^{\omega}(f) \xi](x,z) = \int_{G} f(y)(y^{-1}xz)\xi(y^{-1}xz,z)\, dy.
\]
 On the other hand
\begin{eqnarray*}
\lambda^D(f) (S\xi) (x,z) &=& \int_G (T_yf(xy))(z) (S\xi)(y^{-1},z)\, dy\\
&=&\int_G (T_{x^{-1}y}f(y))(z) (S\xi)(y^{-1}x,z)\, dy\\
&=&\int_G  f(y)(y^{-1}xz) (S\xi)(y^{-1}x,z)\, dy\\
&=& \int_G f(y)(y^{-1}xz) \xi(y^{-1}xz,z)\, dy.
\end{eqnarray*}
Consequently, 
\begin{equation}
  \label{eq:cc9}
\lambda ^D(f) (S\xi ) = SR^\omega (f)\xi   
\end{equation}
 for all $f\in
\cL $ and $\xi \in L ^2(G\times G)$. Since $S $ is unitary on $L ^2(G\times G)$, $\lambda ^D$ and
$R^\omega $ are equivalent, whence 
$\norm{R^\omega(f)}=\norm{\lambda ^D(f)}$ for all $f\in \mathcal{L}$. 
\par
Now for $\xi \in L^2(G\times G)$ write $\xi_z(x)=\xi(x,z)$:
\begin{eqnarray*}
\norm{R^{\omega}(f) \xi}^2 &=&\int_G \int_G %
\abs{\int_{G} f(y)(y^{-1}x)\xi(y^{-1}x,z)\, dy}^2\,dx dz\\
&=&\int_G \int_G %
\abs{\int_{G} f(y)(y^{-1}x)\xi_z(y^{-1}x)\, dy}^2\,dx dz\\
&=&\int_G \int_G %
\abs{R(f)\xi_z(x)}^2\,dx dz\\
&\leq&\int_G\{\norm{R(f)}\norm{\xi_z}\}^2\,dz\\
&=&\norm{R(f)}^2\norm{\xi}^2.
\end{eqnarray*}
it follows that $\norm{R^{\omega}(f)}\leq\norm{R(f)}$.
\par
For the converse inequality let $\varphi\in L^2(G)$
be an element of norm one. Embed $L^2(G)$ in $L^2(G\times G$
by $\xi \mapsto\xi'$ where $\xi'(x,z)=\xi(x)\varphi(z)$ then
\begin{eqnarray*}
\norm{R(f)\xi}^2&=&\int_G %
\abs{R(f)\xi(x)}^2\,dx\\
&=&\int_G \int_G \abs{R(f)\xi(x)}^2\abs{\varphi(z)}^2\,dxdz\\
&=&\int_G \int_G\abs{R^{\omega}(f)\xi'(x,z)}^2\,dxdz\\
&\leq&\norm{R^{\omega}(f)}^2\int_G \int_G\abs{\xi'(x,z)}^2\,dxdz\\
&=&\norm{R^{\omega}(f)}^2\norm{\xi}^2.
\end{eqnarray*}
\end{proof}
\begin{cor}
Let $G$ be an amenable group, which is rigidly symmetric as a discrete group.
Then for $f\in \mathcal{L}$
\[
\sprad[\mathcal{L}]{f^{\ast}f}=\norm[B(L^2(G))]{R(f)}^2.
\]
\end{cor}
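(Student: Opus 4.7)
The plan is to combine three ingredients that are already in place: the symmetry of $\mathcal{L}=L^1(G,\Ruc(G),T)$ (from the preceding theorem, since $G$ is rigidly symmetric as a discrete group), the weak equivalence $\norm{R(f)}=\norm{\lambda^D(f)}$ established in the preceding proposition, and the amenability hypothesis on $G$. The bridge between symmetry and spectral radius is Hulanicki's lemma, and amenability is what forces the canonical representation $R$ (equivalently, $\lambda^D$) to compute the full $C^{\ast}$-norm.

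First, by the preceding theorem $\mathcal{L}$ is a symmetric Banach $\ast$-algebra. Hulanicki's lemma then identifies the spectral radius of $f^{\ast}f$ with the enveloping $C^{\ast}$-seminorm:
\[
\sprad[\mathcal{L}]{f^{\ast}f}=\sup\{\norm{\pi(f)}^2 : \pi \text{ a } \ast\text{-representation of }\mathcal{L}\}.
\]
Since $R$ is itself a $\ast$-representation, the inequality $\norm{R(f)}^2\leq\sprad[\mathcal{L}]{f^{\ast}f}$ is automatic, and by the preceding proposition $\norm{R(f)}=\norm{\lambda^D(f)}$. It therefore suffices to show that for every $\ast$-representation $\pi$ of $\mathcal{L}$ one has $\norm{\pi(f)}\leq\norm{\lambda^D(f)}$, i.e.\ that $\lambda^D$ weakly contains every $\ast$-representation of $\mathcal{L}$.

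This is the point at which amenability is invoked. The representation $\lambda^D$ arises from the faithful $\ast$-representation $D:\Ruc(G)\to B(L^2(G))$ by the standard induction construction on $L^2(G,L^2(G))$, so it plays the role of the reduced covariance representation of the twisted $L^1$-algebra. For amenable $G$, the classical Fell-absorption argument carries over: using a Reiter net $(\varphi_{\alpha})$ of nonnegative $L^1$-functions of integral one with $\norm[1]{L_x\varphi_{\alpha}-\varphi_{\alpha}}\to 0$ uniformly on compacta, any vector state of $\pi$ can be approximated in the weak-$\ast$ topology by vector states of $\lambda^D$ (applied to vectors of the form $\varphi_{\alpha}^{1/2}\otimes\eta$ after a suitable tensoring with $\pi$). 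This yields the desired norm inequality, and hence the chain of identities
\[
\sprad[\mathcal{L}]{f^{\ast}f}=\sup_{\pi}\norm{\pi(f)}^2=\norm{\lambda^D(f)}^2=\norm{R(f)}^2.
\]

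The main obstacle is the amenability step: verifying that the Reiter net argument really transfers, in the twisted setting, into a weak-containment statement between an arbitrary $\ast$-representation of $L^1(G,\Ruc(G),T)$ and the induced representation $\lambda^D$. Once this absorption is in hand, the conclusion of the corollary follows immediately from symmetry (Hulanicki) and the proposition on weak equivalence of $R$ and $\lambda^D$.
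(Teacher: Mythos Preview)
Your approach is essentially the same as the paper's: use symmetry of $\mathcal{L}$ to equate $r_{\mathcal{L}}(f^{\ast}f)$ with the square of the maximal $C^{\ast}$-norm, use amenability to show that $\lambda^{D}$ realises this maximal $C^{\ast}$-norm, and then invoke the preceding proposition to replace $\lambda^{D}$ by $R$. Two small differences are worth noting. First, what you call Hulanicki's lemma in the symmetry step the paper attributes to Pt\'ak~\cite{ptak70}: for a symmetric Banach $\ast$-algebra one has $r(f^{\ast}f)=\norm[\ast]{f^{\ast}f}=\norm[\ast]{f}^{2}$. Second, the amenability step you flag as the ``main obstacle'' is handled in the paper not by a direct Fell--absorption/Reiter-net argument but by a citation: since $D$ is a faithful (hence maximal) representation of $\Ruc(G)$, Leptin~\cite[Satz~6]{lep68} gives directly that for amenable $G$ the $D$-regular representation $\lambda^{D}$ computes the largest $C^{\ast}$-norm on $\mathcal{L}$. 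Invoking that result removes the only gap in your write-up.
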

\begin{proof}
We imposed the amenability on $G$ to have that the 
largest $C^{\ast}$-norm, denoted $\norm[\ast]{\,.\,}$,
 on $\mathcal{L}$ is just 
given by the $D$-regular representation, since
the representation $D$ of $\Ruc$ is a maximal 
representation~\cite[Satz 6]{lep68}. Therefore
\[
\norm[\ast]{f}=\norm{\lambda^D(f)}=\norm[B(L^2(G))]{R(f)}
\quad \forall f\in\mathcal{L}.
\]
Since $\mathcal{L}$ is symmetric Ptaks theorem~\cite{ptak70}
asserts
\[
\sprad[\mathcal{L}]{f^{\ast}f}=\norm[\ast]{f^{\ast}f}=\norm[\ast]{f}^2.
\]
\end{proof}
\begin{thm}
Let $G$ be an amenable group, which is rigidly symmetric as a discrete group.
Then for an operator $A\in CD_{reg}$
\[
\spr[CD]{A}=\spr[B(L^2(G))]{A}.
\]
\end{thm}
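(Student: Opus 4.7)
The plan is to transfer the spectral radius identity from the previous corollary through the isometric $\ast$-isomorphism $R:\mathcal{L}\to CD_{reg}$ and then use a standard polynomial-approximation argument to push it up to full spectral equality. Since $A\in CD_{reg}$ corresponds to a unique $f\in\mathcal{L}$ with $R(f)=A$, and since $R$ is an isometric $\ast$-isomorphism onto its image, the corollary immediately gives
\[
\sprad[CD]{A^{\ast}A}=\norm[B(L^2(G))]{A}^{2}\qquad\text{for all }A\in CD_{reg}.
\]
The inclusion $\spr[B(L^2(G))]{A}\subset\spr[CD]{A}$ is automatic (any inverse in $CD_{reg}$ is a fortiori an inverse in $B(L^2(G))$), so only the reverse inclusion requires work.

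First I would treat the self-adjoint case. If $A=A^{\ast}\in CD_{reg}$, then $CD_{reg}\cong\mathcal{L}$ being symmetric gives $\spr[CD]{A}\subset\RR$, and applying the displayed identity to $A$ itself yields $\sprad[CD]{A^{2}}=\norm[B(L^2)]{A}^{2}$. Since the spectral radius satisfies $\sprad{A^{2}}=\sprad{A}^{2}$, this gives $\sprad[CD]{A}=\norm[B(L^2)]{A}$ for every self-adjoint $A\in CD_{reg}$. Now for any real polynomial $p$, $p(A)$ is again self-adjoint in $CD_{reg}$, so by the same identity and the spectral mapping theorem
\[
\max_{t\in\spr[CD]{A}}|p(t)|=\sprad[CD]{p(A)}=\norm[B(L^2)]{p(A)}=\max_{t\in\spr[B(L^2)]{A}}|p(t)|,
\]
where the last equality holds because $p(A)$ is self-adjoint in the $C^{\ast}$-algebra $B(L^2(G))$. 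Both $\spr[CD]{A}$ and $\spr[B(L^2)]{A}$ are compact subsets of $\RR$ with the first containing the second. If the inclusion were strict, a standard Stone--Weierstrass construction would produce a real polynomial peaking on the extra set and small on $\spr[B(L^2)]{A}$, contradicting the displayed equality. Hence $\spr[CD]{A}=\spr[B(L^2)]{A}$ in the self-adjoint case.

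To extend to arbitrary $A\in CD_{reg}$, I would use the usual reduction: $A$ is invertible in $B(L^2(G))$ if and only if both $A^{\ast}A$ and $AA^{\ast}$ are invertible in $B(L^2(G))$. Both are self-adjoint elements of $CD_{reg}$, so the previous paragraph yields their invertibility in $CD_{reg}$, and then $A^{-1}=(A^{\ast}A)^{-1}A^{\ast}=A^{\ast}(AA^{\ast})^{-1}$ lies in $CD_{reg}$. Applying this reasoning to $A-\lambda$ for each $\lambda\in\CC$ gives $\spr[CD]{A}\subset\spr[B(L^2)]{A}$, completing the proof.

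The principal obstacle is really just the polynomial-approximation step in the self-adjoint case; everything else is bookkeeping through the isomorphism $R$. The amenability and rigid-symmetry hypotheses enter solely through the previous corollary, where amenability was needed to identify the $D$-regular representation with the maximal $C^{\ast}$-norm and rigid symmetry to invoke Pt\'ak's theorem; once $\sprad[\mathcal{L}]{f^{\ast}f}=\norm[B(L^2)]{R(f)}^{2}$ is available, the spectral invariance argument is formal.
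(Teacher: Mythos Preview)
Your argument is correct and coincides in substance with the paper's proof. The paper simply writes ``this follows from the above corollary by an application of Hulanicki's Lemma'' and cites~\cite{fgl}; what you have done is unpack that lemma: the spectral-radius identity for $A^{\ast}A$, the polynomial/Stone--Weierstrass step for self-adjoint elements, and the reduction of general $A$ to $A^{\ast}A$ and $AA^{\ast}$ are precisely the standard ingredients of Hulanicki's Lemma as used in this context. So there is no genuine difference in approach---you have reproved the cited black box rather than invoked it. One small point worth making explicit in your write-up is that $CD_{reg}\cong\mathcal{L}$ is non-unital when $G$ is non-discrete, so spectra and inverses are to be taken in the unitization; this does not affect the argument but should be said.
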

\begin{proof}
Since $A=R(f)$ for some $f\in \mathcal{L}$, 
this follows from the above corollary by an application of
Hulanicki's Lemma~\cite{fgl}.
\end{proof}

\section{An example of Non  Symmetry}\label{sec:example}
A special role in the theory of symmetry of 
group algebras is played by the group of affine mappings of the real line,
 the ``$ax+b$''-group. This group has a symmetric $L^1$-algebra~\cite{Lep77},
 but
its discretised version is not symmetric~\cite{jen69}. We do not know about
the rigid symmetry of the continuous group. 
We shall consider
the connected component of the identity.

\[ \AF=\left\{\begin{pmatrix}a&b\\ 0&1 \end{pmatrix}, \quad a>0, b\in \RR\right\}.\]

The multiplication is 
$(a,b)\cdot (a',b')=(a a', ab'+b)$ and the action on $\RR$:
$(a,b):x\mapsto ax+b$. The left Haar measure is $\frac{da}{a^2}db$.
\par
\begin{thm}\label{theorem:notsym}
For this group
$L^1(\AF,\Ruc({\AF}),T)$ is not symmetric, where $T$ is left translation on 
the right uniformly continuous functions $\Ruc({\AF})$.
\end{thm}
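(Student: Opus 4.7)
The plan is to realise inside $L^1(\AF, \Ruc(\AF), T)$ the specific non-symmetric twisted $L^1$-algebra highlighted in Remark~(ii) of Section~3 (which packages \cite[Thm.~5]{LP79}), and then to transfer that non-symmetry to the ambient algebra.

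Write $\AF = A \ltimes N$ with $A = (0,\infty)$ the dilation subgroup and $N = \RR$ the (normal) translation subgroup, and let $\pi : \AF \to A$ be the quotient homomorphism. First I would verify that the pullback
$$\pi^{\ast} : C_0(A) \hookrightarrow \Ruc(\AF), \qquad g \longmapsto \bigl((a,b) \mapsto g(a)\bigr),$$
lands in $\Ruc(\AF)$: this is a direct estimate, since $\norm[\infty]{T_y (\pi^{\ast} g) - \pi^{\ast} g} = \sup_a \abs{g(a/a_y) - g(a)}$ tends to $0$ as $y \to e$ by the uniform continuity of $g \in C_0(A)$ on the multiplicative group $A$. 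The subspace $\pi^{\ast} C_0(A)$ is obviously $\AF$-invariant, and the induced $\AF$-action on $C_0(A)$ factors through $\pi$, so $N$ acts trivially and $A$ acts on $C_0(A)$ by its canonical translation.

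Functoriality of the twisted $L^1$-construction then produces a closed $\ast$-subalgebra $L^1(\AF, C_0(A), \hat T)$ inside $L^1(\AF, \Ruc(\AF), T)$. Unfolding the semidirect product $\AF = A \ltimes N$, and switching to the additive parameterisation of $A$ via $\log$, identifies this subalgebra (up to matching of the modular factor and the two dilations on the $N$- and $C_0(\RR)$-coordinates) with the algebra $L^1(\RR, L^1(\RR, C_0(\RR)), \tilde T)$ of Remark~(ii). By Remark~(ii) this algebra is not symmetric, so we obtain a self-adjoint element $h$ in the subalgebra with $\spec{h} \not\subset \RR$.

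The main obstacle is the transfer step, since non-symmetry is not in general inherited upward by closed $\ast$-subalgebras. The cleanest route would be to construct an $\AF$-equivariant $\ast$-homomorphic retraction $\Ruc(\AF) \twoheadrightarrow \pi^{\ast} C_0(A)$, turning the subalgebra into a $\ast$-quotient: since quotients of symmetric Banach $\ast$-algebras are symmetric, non-symmetry of the quotient would immediately imply non-symmetry of $L^1(\AF, \Ruc(\AF), T)$. If no such retraction exists, the fall-back is to use the canonical representation $R$ from Proposition~\ref{prop:R} and verify at the operator level that the non-real point in $\spec{h}$ produced by the LP79 argument persists when $h$ is viewed inside $CD_{reg}(\AF) \cong L^1(\AF, \Ruc(\AF), T)$, via a direct resolvent computation on $L^2(\AF)$.
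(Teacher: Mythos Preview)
Your outline coincides with the paper's proof: pull back $C_0$ of the dilation quotient along $\pi:\AF\to A$ into $\Ruc(\AF)$, note that the induced $\AF$-action factors through $\pi$ (so $N$ acts trivially), and identify the resulting closed $\ast$-subalgebra $L^1(\AF, C_0(A), \hat T)$ with the Leptin--Poguntke algebra $L^1(\RR, L^1(\RR, C_0(\RR)), \tilde T)$ via the $\log$ parametrisation; then invoke \cite[Theorem~5]{LP79}. The paper carries out exactly these steps, writing down the isomorphism $S$ explicitly.

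However, the ``main obstacle'' you flag does not exist. Symmetry \emph{is} inherited by closed $\ast$-subalgebras: if $\mathcal{A}$ is symmetric and $\mathcal{B}\subset\mathcal{A}$ is a closed $\ast$-subalgebra, then $\mathcal{B}$ is symmetric. (By Shirali--Ford it suffices that every self-adjoint $b\in\mathcal{B}$ have real $\mathcal{B}$-spectrum; since $\sigma_{\mathcal{A}}(b)\subset\RR$ and the standard permanence $\partial\sigma_{\mathcal{B}}(b)\subset\sigma_{\mathcal{A}}(b)$ holds, $\sigma_{\mathcal{B}}(b)$ is compact with boundary in $\RR$, hence lies in $\RR$.) The paper itself invokes this fact, without proof, immediately before Corollary~\ref{cor:1}. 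By contrapositive, once the non-symmetric LP79 algebra sits inside $L^1(\AF,\Ruc(\AF),T)$ as a closed $\ast$-subalgebra, non-symmetry of the ambient algebra follows at once---no equivariant retraction and no resolvent computation on $L^2(\AF)$ is needed. Your proof is thus already complete at the end of the second paragraph; the third should simply be dropped.
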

\begin{proof}
We identify the normal subgroup $N$ of translations with $\RR$,
and by means of the exponential map identify $\RR$ 
with the subgroup of dilations.
\par
Explicitly:
Let $\omega:(s,x)\mapsto e^sx$ from $\RR\times\RR\to \RR$.
The action of $\RR$ on $C_0(\RR)$ is given by 
$u\mapsto u^s:=u(\omega(s,\cdot))$.
The additive group $\RR$ acts on 
$L^1(\RR,C_0(\RR))$ by
\[\tilde{T}(s)f(x)(\cdot) = e^{-s}f(\omega(-s,x))(\omega(s,\cdot))\in C_0(\RR), 
\quad f\in L^1(\RR,C_0(\RR)), s,x\in \RR.\]
Here $e^s$ is the modulus of the action $\omega(s,\cdot)$
on $\RR$, with respect to translation invariant Lebesgue measure $dx$.
From~\cite[Theorem 5]{LP79} we know that
the twisted algebra $L^1(\RR,L^1(\RR,C_0(\RR)),\tilde{T})$ is not symmetric.
So the theorem will be proved if we show that this Banach-{$\ast$} algebra is
isomorphic to a closed $\ast$-subalgebra of $L^1(\AF,\Ruc({\AF}),T)$.
\par
We let $\AF$ act on $C_0(\RR)$ by $T'((a,b)):u\mapsto u\circ\omega(\log(a),\cdot)$. Then
$L^1(\AF,C_0(\RR),T')$ and $L^1(\RR,L^1(\RR,C_0(\RR)),\tilde{T})$
are isometrically isomorphic Banach *-algebras:
For $f\in L^1(\RR,L^1(\RR,C_0(\RR)),\tilde{T})$
let $Sf\in L^1(\AF,C_0(\RR),T')$ be defined
by $Sf(a,b)=f(\log(a))(-\frac{b}{a})\in C_0(\RR),\quad (a,b)\in\AF$.
Indeed
\begin{equation*}
\norm{Sf}\;=\;\int_{\RR}\int_{\RR_{>0}}\norm[\infty]{Sf(a,b)}\frac{da}{a^2}\,db%
\;=\;\int_{\RR}\int_{\RR}\norm[\infty]{f(s)(b)}\,ds\,db\;=\;\norm{f}.
\end{equation*}
With the notation $a'=e^t$ and $a=e^s$:
\begin{eqnarray*}
S(f\stackrel{\tilde{T}}{\ast} h)(a,b)&=&(f\stackrel{\tilde{T}}{\ast} h) (s)(-\frac{b}{a})%
\;=\;\int_{\RR}(\tilde{T}(t)f(s+t))\ast h(-t)(-\frac{b}{a})\,dt\\
&=&\int_{\RR}\int_{\RR}(\tilde{T}(t)f(s+t))(\frac{-b}{a}+v)\cdot h(-t))(-v)\,dv\,dt\\
&=&\int_{\RR}\int_{\RR}e^{-t}f(s+t)(\omega(-t,-\frac{b}{a}+v))\circ\omega(t,\cdot))\cdot \\
&&\quad h(-t))(-v)\,dv\,dt\\%
&=&\int_{\RR}\int_{\RR}e^{-t}f(s+t)(\omega(-t,-\frac{b}{a}-v))\circ\omega(t,\cdot))\cdot\\
&&\quad h(-t))(v)\,dv\,dt\\%
&=&\int_{\RR}\int_{\RR}e^{-t} f(s+t)(-e^{-t} (\frac{b}{a}+v)))\circ\omega(t,\cdot))\cdot\\
&&\quad h(-t))(v)\,dv\,dt\\%
&=&\int_{\RR}\int_{\RR}e^{-t} Sf(e^{s+t},e^{s}(\frac{b}{a}+v))%
\circ\omega(t,\cdot)\cdot\\%
&&\quad Sh(e^{-t},-e^{-t}v)\,dv\,dt\\
\end{eqnarray*}
\begin{eqnarray*}
&=&\int_{\RR}\int_{\RR_{>0}} Sf(aa',a(\frac{b}{a}+v))%
\circ\omega(\log(a'),\cdot)\cdot%
Sh(\frac{1}{a'},-\frac{v}{a'})\frac{da'}{a'^2}\,dv\\
&=&\int_{\RR}\int_{\RR_{>0}}\!\!\! Sf((a,b)(a',v))\circ\omega(\log(a'),\cdot)
Sh((a',v)^{-1})\,\frac{da'}{a'^2}\,dv\\
&=&\int_{\RR}\int_{\RR_{>0}}T'((a',v)Sf((a,b)(a',v))Sh((a',v)^{-1})\,\frac{da'}{a'^2}\,dv\\
&=& (Sf\stackrel{T'}{\ast} Sh)(a,b),
\end{eqnarray*}
For the $\ast$ operation on the respective algebras we have
\begin{eqnarray*}
(Sf)^{\ast}((a,b))&=& \Delta_{\AF}((a,b)^{-1})T'((a,b)^{-1})\overline{Sf((a,b)^{-1})},
\end{eqnarray*}
and
\begin{eqnarray*}
f^{\ast}(s)(b)&=&\tilde{T}(-s)\overline{f(-s)(-b)}
\end{eqnarray*}
So that
\begin{eqnarray*}
f^{\ast}(s)(b)&=&\tilde{T}(-s)(\overline{f(-s)(-b)}\\
&=&e^{s}\overline{f(-s)(-b)}\circ\omega(-s,\cdot)\\
&=&a\overline{Sf(\frac{1}{a},-\frac{b}{a})}\circ %
\omega(\log(\frac{1}{a},\cdot)\\
&=&\Delta_{\AF}((a^{-1},-\frac{b}{a}))T'((a^{-1},-\frac{b}{a}))\overline{Sf((a^{-1},-\frac{b}{a}))}\\
&=&\Delta_{\AF}((a,b)^{-1})T'((a,b)^{-1})\overline{Sf((a,b)^{-1})}\\
&=&(Sf)^{\ast}((a,b)).
\end{eqnarray*}
\par
Now we can embed $C_0(\RR)\to \Ruc({\AF})$ by extending a function,
already defined on the dilations to a function on $\AF$
which is independent of  the translation coordinate of a group element.
Restricting the left translation to this subspace
is just the action we denoted $T'$. So we obtain an embedding of $L^1(\AF,C_0(\RR),T')$ into
$L^1(\AF,\Ruc(\AF),T)$.
\end{proof}
\begin{cor}
$CD_{reg}(\AF)$ is not inverse-closed in $B(L^2(\AF))$.
\end{cor}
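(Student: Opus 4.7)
The plan is to deduce the corollary directly from Theorem~\ref{theorem:notsym} via the standard fact that any inverse-closed $*$-subalgebra of $B(H)$ inherits symmetry from $B(H)$.

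Concretely, first I would recall that by Proposition~\ref{prop:R} the map $R:L^1(\AF,\Ruc(\AF),T) \to CD_{reg}(\AF)$ is an isometric $*$-algebra isomorphism (onto its image, which by Definition~\ref{def:cdreg} is exactly $CD_{reg}(\AF)$). In particular, symmetry of $CD_{reg}(\AF)$ as a Banach $*$-algebra is equivalent to symmetry of $L^1(\AF,\Ruc(\AF),T)$.

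Next, I would argue by contradiction. Suppose $CD_{reg}(\AF)$ is inverse-closed in $B(L^2(\AF))$. Then for every $A\in CD_{reg}(\AF)$ one has the equality of spectra
\[
\spec[CD_{reg}(\AF)]{A} = \spec[B(L^2(\AF))]{A}.
\]
Applying this to $A^{\ast}A$ and using that $A^{\ast}A$ is a positive operator on the Hilbert space $L^2(\AF)$, we get
\[
\spec[CD_{reg}(\AF)]{A^{\ast}A} = \spec[B(L^2(\AF))]{A^{\ast}A}\subset [0,\infty).
\]
Since this holds for every $A\in CD_{reg}(\AF)$, the algebra $CD_{reg}(\AF)$ would be symmetric. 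Transferring this along the isometric $*$-isomorphism $R$, the algebra $L^1(\AF,\Ruc(\AF),T)$ would be symmetric as well, contradicting Theorem~\ref{theorem:notsym}.

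There is essentially no obstacle here: the work is all in Theorem~\ref{theorem:notsym} (the embedding of the non-symmetric twisted algebra of Leptin–Poguntke into $L^1(\AF,\Ruc(\AF),T)$) and in Proposition~\ref{prop:R} (the isometric identification of $L^1(\AF,\Ruc(\AF),T)$ with $CD_{reg}(\AF)$). The only point to verify carefully is that $R$ really is a $*$-\emph{isomorphism} onto $CD_{reg}(\AF)$ and not merely a contractive homomorphism, so that non-symmetry actually transports back and forth; this is exactly the content of Proposition~\ref{prop:R} together with Definition~\ref{def:cdreg}.
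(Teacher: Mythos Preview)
Your argument is correct and essentially the same as the paper's: both derive the failure of inverse-closedness directly from the non-symmetry established in Theorem~\ref{theorem:notsym}, transported to $CD_{reg}(\AF)$ via the isometric $*$-isomorphism $R$ of Proposition~\ref{prop:R}. The paper phrases it contrapositively by exhibiting a selfadjoint $A\in CD_{reg}(\AF)$ with a non-real spectral value $\lambda$, so that $A-\lambda\,\mathrm{id}$ is invertible in $B(L^2(\AF))$ but not in $CD_{reg}(\AF)+\CC\,\mathrm{id}$; the only point you might make explicit is this passage to the unitization, since $CD_{reg}(\AF)$ is non-unital for non-discrete $G$.
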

\begin{proof}
By the above theorem there is a selfadjoint operator in $CD_{reg}(\AF)$,
with a non-real spectrum. So  some operator  in $CD_{reg}(\AF)+\CC\mbox{id}$
is invertible in $B(L^2(\AF))$ but not in $CD_{reg}(\AF)+\CC\mbox{id}$.
\end{proof}


\begin{thebibliography}{66}

\bibitem{Balan08}
R.~Balan.
\newblock The noncommutative {W}iener lemma, linear independence, and spectral
  properties of the algebra of time-frequency shift operators.
\newblock {\em Trans. Amer. Math. Soc.}, 360(7):3921--3941, 2008.

\bibitem{Bask90}
A.~G. Baskakov.
\newblock Wiener's theorem and asymptotic estimates for elements of inverse
  matrices.
\newblock {\em Funktsional. Anal. i Prilozhen.}, 24(3):64--65, 1990.

\bibitem{Bask92}
A.~G. Baskakov.
\newblock Abstract harmonic analysis and asymptotic estimates for elements of
  inverse matrices.
\newblock {\em Mat. Zametki}, 52(2):17--26, 155, 1992.

\bibitem{Bask97a}
A.~G. Baskakov.
\newblock Asymptotic estimates for elements of matrices of inverse operators,
  and harmonic analysis.
\newblock {\em Sibirsk. Mat. Zh.}, 38(1):14--28, i, 1997.

\bibitem{Bask97}
A.~G. Baskakov.
\newblock Estimates for the elements of inverse matrices, and the spectral
  analysis of linear operators.
\newblock {\em Izv. Ross. Akad. Nauk Ser. Mat.}, 61(6):3--26, 1997.

\bibitem{BelBel15a}
I.~Belti{\c{t}}{\u{a}} and D.~Belti{\c{t}}{\u{a}}.
\newblock Erratum to: {I}nverse-closed algebras of integral operators on
  locally compact groups [ {MR}3324106].
\newblock {\em Ann. Henri Poincar\'e}, 16(5):1307--1309, 2015.

\bibitem{BelBel15}
I.~Belti{\c{t}}{\u{a}} and D.~Belti{\c{t}}{\u{a}}.
\newblock Inverse-closed algebras of integral operators on locally compact
  groups.
\newblock {\em Ann. Henri Poincar\'e}, 16(5):1283--1306, 2015.

\bibitem{BP}
S.~Bochner and R.~S. Phillips.
\newblock Absolutely convergent {F}ourier expansions for non-commutative normed
  rings.
\newblock {\em Ann. of Math. (2)}, 43:409--418, 1942.

\bibitem{deL}
K.~de Leeuw.
\newblock An harmonic analysis for operators. {I}. {F}ormal properties.
\newblock {\em Illinois J. Math.}, 19(4):593--606, 1975.

\bibitem{diuh77}
J.~{D}iestel and J.~J.~j. {U}hl.
\newblock {\em {V}ector {M}easures.}, volume~15 of {\em {M}athematical
  {S}urveys}.
\newblock {M}athematical {S}urveys. {N}o.15. {P}rovidence, {R}.{I}.: {A}merican
  {M}athematical {S}ociety ({A}{M}{S}). {X}{I}{I}{I}, 1977.
\bibitem{Dix79}
J.~{Dixmier}
\newblock {\em {$C\sp*$}-algebras.} Translated from the French by Francis Jellett.
\newblock North-Holland Publishing Co., Amsterdam-New York-Oxford, 1977.

\bibitem{FarrStro10}
B.~Farrell and T.~Strohmer.
\newblock Inverse-closedness of a {B}anach algebra of integral operators on the
  {H}eisenberg group.
\newblock {\em J. Operator Theory}, 64(1):189--205, 2010.

\bibitem{fgl08}
G.~Fendler, K.~Gr{\"o}chenig, and M.~Leinert.
\newblock Convolution-dominated operators on discrete groups.
\newblock {\em Integral Equations Operator Theory}, 61(4):493--509, 2008.

\bibitem{FGL10}
G.~Fendler, K.~Gr{\"o}chenig, and M.~Leinert.
\newblock Convolution-dominated integral operators.
\newblock In {\em Noncommutative harmonic analysis with applications to
  probability {II}}, volume~89 of {\em Banach Center Publ.}, pages 121--127.
  Polish Acad. Sci. Inst. Math., Warsaw, 2010.

\bibitem{fgl}
G.~Fendler, K.~Groechenig, and Leinert.
\newblock Symmetry of weighted {$L^1$}-algebras and the {GRS}-condition.
\newblock {\em Bull. London Math. Soc.}, 38(4):625--635, 2006.

\bibitem{GoKaWo89a}
I.~Gohberg, M.~A. Kaashoek, and H.~J. Woerdeman.
\newblock The band method for positive and strictly contractive extension
  problems: an alternative version and new applications.
\newblock {\em Integral Equations Operator Theory}, 12(3):343--382, 1989.

\bibitem{Groe06}
K.~Gr{\"o}chenig.
\newblock Time-frequency analysis of {S}j\"ostrand's class.
\newblock {\em Rev. Mat. Iberoam.}, 22(2):703--724, 2006.

\bibitem{GroechKlotz10}
K.~Gr{\"o}chenig and A.~Klotz.
\newblock Noncommutative approximation: inverse-closed subalgebras and
  off-diagonal decay of matrices.
\newblock {\em Constr. Approx.}, 32(3):429--466, 2010.

\bibitem{GrKl13}
K.~Gr{\"o}chenig and A.~Klotz.
\newblock Norm-controlled inversion in smooth {B}anach algebras, {I}.
\newblock {\em J. Lond. Math. Soc. (2)}, 88(1):49--64, 2013.

\bibitem{GrKl14}
K.~Gr{\"o}chenig and A.~Klotz.
\newblock Norm-controlled inversion in smooth {B}anach algebras, {II}.
\newblock {\em Math. Nachr.}, 287(8-9):917--937, 2014.

\bibitem{GL01}
K.~Gr{\"o}chenig and M.~Leinert.
\newblock Wiener's lemma for twisted convolution and {G}abor frames, 2004.
 \newblock {\em J. Amer. Math. Soc.}, 17(1):1–18, 2004.  
\bibitem{hul72}
A.~Hulanicki.
\newblock On the spectrum of convolution operators on groups with polynomial
  growth.
\newblock {\em Invent. Math.}, 17:135--142, 1972.

\bibitem{jen69}
J.~W. Jenkins.
\newblock An amenable group with a non-symmetric group algebra.
\newblock {\em Bull. Amer. Math. Soc.}, 75:357--360, 1969.

\bibitem{Klotz12}
A.~Klotz.
\newblock Spectral invariance of {B}esov-{B}essel subalgebras.
\newblock {\em J. Approx. Theory}, 164(2):268--296, 2012.

\bibitem{Klotz14}
A.~Klotz.
\newblock Inverse closed ultradifferential subalgebras.
\newblock {\em J. Math. Anal. Appl.}, 409(2):615--629, 2014.

\bibitem{Kurb99}
V.~G. Kurbatov.
\newblock {\em Functional-differential operators and equations}, volume 473 of
  {\em Mathematics and its Applications}.
\newblock Kluwer Academic Publishers, Dordrecht, 1999.

\bibitem{Kurb01}
V.~G. Kurbatov.
\newblock Some algebras of operators majorized by a convolution.
\newblock {\em Funct. Differ. Equ.}, 8(3-4):323--333, 2001.
\newblock International Conference on Differential and Functional Differential
  Equations (Moscow, 1999).

\bibitem{lep67}
H.~Leptin.
\newblock {V}erallgemeinerte ${L}^{1}$-{A}lgebren und projektive
  {D}arstellungen lokal kompakter {G}ruppen {I}.
\newblock {\em Invent. Math.}, 3:257--281, 1967.

\bibitem{lep67a}
H.~Leptin.
\newblock Verallgemeinerte {$L^1$}-{A}lgebren und projektive {D}arstellungen
  lokal kompakter {G}ruppen {II}.
\newblock {\em Invent. Math}, 4:68--86, 1967.

\bibitem{lep68}
H.~Leptin.
\newblock {D}arstellungen verallgemeinerter {$L^1$}-{A}lgebren.
\newblock {\em Invent. Math}, 5:192--215, 1968.

\bibitem{lep76}
H.~Leptin.
\newblock Ideal theory in group algebras of locally compact groups.
\newblock {\em Invent. Math.}, 31:259--278, 1976.

\bibitem{lep76a}
H.~Leptin.
\newblock Symmetrie in {B}anachschen {A}lgebren.
\newblock {\em Arch. Math. (Basel)}, 27(4):394--400, 1976.

\bibitem{Lep77}
H.~Leptin.
\newblock Lokal kompakte {G}ruppen mit symmetrischen {A}lgebren.
\newblock In {\em Symposia {M}athematica, {V}ol. {XXII} ({C}onvegno
  sull'{A}nalisi {A}rmonica e {S}pazi di {F}unzioni su {G}ruppi {L}ocalmente
  {C}ompatti, {INDAM}, {R}ome, 1976)}, pages 267--281. Academic Press, London,
  1977.

\bibitem{lep79}
H.~Leptin.
\newblock On one-sided harmonic analysis in noncommutative locally compact
  groups.
\newblock {\em J. Reine Angew. Math.}, 306:122--153, 1979.

\bibitem{LP79}
H.~Leptin and D.~Poguntke.
\newblock Symmetry and non-symmetry for locally compact groups.
\newblock {\em J. Funct. anal.}, 33:119--134, 1979.

\bibitem{Man15}
M.~M{\u{a}}ntoiu.
\newblock Symmetry and inverse closedness for {B}anach *-algebras associated to
  discrete groups.
\newblock {\em Banach J. Math. Anal.}, 9(2):289--310, 2015.

\bibitem{pog92}
D.~Poguntke.
\newblock Rigidly symmetric {$L^1$}-group algebras.
\newblock {\em {Seminar} {Sophus} {Lie}}, 2:189--197, 1992.

\bibitem{ptak70}
V.~{P}t{\'a}k.
\newblock On the spectral radius in {B}anach algebras with involution.
\newblock {\em Bull. London math. Soc.}, 2:327--334, 1970.

\bibitem{rei67}
H.~Reiter.
\newblock {\em {C}lassical Harmonic Analysis and Locally Compact Groups}.
\newblock Oxford Univ. Press, 1967.

\bibitem{Schep77}
A.~R. Schep.
\newblock {\em Kernel operators}.
\newblock Rijksuniversiteit te Leiden, Leiden, 1977.

\bibitem{ShinSun13}
C.~E. Shin and Q.-y. Sun.
\newblock Wiener's lemma: localization and various approaches.
\newblock {\em Appl. Math. J. Chinese Univ. Ser. B}, 28(4):465--484, 2013.

\bibitem{Sjo95}
J.~Sj{\"o}strand.
\newblock Wiener type algebras of pseudodifferential operators.
\newblock In {\em S\'eminaire sur les \'Equations aux D\'eriv\'ees Partielles,
  1994--1995}, Exp.\ No.\ IV. \'Ecole Polytech., Palaiseau, 1995.

\bibitem{Sun07a}
Q.~Sun.
\newblock Wiener's lemma for infinite matrices.
\newblock {\em Trans. Amer. Math. Soc.}, 359(7):3099--3123 (electronic), 2007.

\bibitem{Wiener32}
N.~Wiener.
\newblock Tauberian theorems.
\newblock {\em Ann. of Math. (2)}, 33(1):1--100, 1932.

\end{thebibliography}
\end{document}